\def\om{\omega}
\def\Om{\Omega}
\def\p{  \partial  }
\def\demi{\frac{1}{2}}
\def\L2Om{L^2(\Om)}
\def\L2om{L^2(\om)}
\theoremstyle{plain}
\newtheorem{proposition}{Proposition}[section]
\newtheorem{definition}{Definition}[section]
\newtheorem{lemma}{Lemma}[section]
\theoremstyle{remark}
\newtheorem{remark}{Remark}[section]
\numberwithin{equation}{section}
\newcommand{\PBS}[1]{\let\temppp=\\#1\let\\=\temppp}
\title{
Periodic solutions of o.d.e. systems with a lipchitz non linearity 
}
\author{ Bernard Rousselet\thanks{ Universit\'{e} de Nice Sophia Antioplis ,  J.A.D. laboratory of mathematics and interactions, UMR CNRS 6621, Parc Valrose, 06108 Nice, France; br unice.fr} }
\date{\today}
\begin{document}
\bibliographystyle{alpha}
\maketitle

\begin{abstract}
  In this report, we address differential systems  with Lipschitz non linearities; this study is motivated by the subject of vibrations of structures with unilateral springs or non linear stress-strain law close to the linear case.
We consider  existence and solution with fixed point methods; this method is constructive and provides a numerical algorithm which is under study.
We describe the method for a static case example and we address  periodic solutions of differential systems arising in the vibration of structures.
\end{abstract}
 \section{Introduction}
 \label{sec:intro}
  
This work stems from non destructive testing experiments reported in \cite{vdb-lagier04} and from the experiments and the computations of a beam model of the dynamic of satellite solar arrays\footnote{ with the support of Thales Alenia Space (Cannes, France)}  \cite{hazim-ecssmt},\cite{hazim-these}. In these two situations the structure is submitted to an harmonic force; the behavior of the structure is locally non linear in particular due to unilateral elastic contacts.

 During these studies, it became clear that an in-depth understanding of the forced dynamic behavior relies on the study of periodic solution of the free system; this can be considered as an extension of the use of normal modes of a linear  free system in order to study the dynamics of the forced associated linear system as explained in \cite{Geradin-rixen} (in English see \cite{Geradin-rixen-eng}). Periodic solutions for non linear o.d.e. arising in structural dynamics have been considered in \cite{a3,bellizi-gkm01,sj-brgdr08,nnm-kpgv, jiang-pierre-shaw04, bel2, mikhlin10} and in many others.

In \cite{junca-br10}, the Lindstedt-Poincarr\'e method was used in order to derive approximate non linear normal modes (a periodic solution close to a linear normal mode) for small non linearity.

At this point it is worth to recall  a fundamental remark of Henri Poincarr\'e in his report "Sur le probl\`eme des trois corps et les \'equations de la dynamique":
\begin{quote}
``Ce qui rend ces solutions p\'eriodiques aussi pr\'ecieuses, c'est qu'elle sont, pour ainsi dire, la seule br\`eche  par o\`u nous puissions p\'en\'etrer dans une place jusqu'ici r\'eput\'ee inabordable'' (\cite{poincarre92-99}). 
\end{quote}
In English
\begin{quote}
  ``What renders these periodic solutions so precious is that they are, so to speak, the only breach through which we may try to penetrate a stronghold previously reputed to be impregnable''
\end{quote}
Although this remark is mainly devoted to celestial mechanic, it is also meaning-full for  vibration of structures.

  In this report we study periodic solutions of non linear systems of ODE arising in free vibrations of structures subjected to unilateral springs; the ideas and the methods may be used for systems modeling other situations in particular when the non linearity is only Lipschitz.
We recall a simple modeling process in section \ref{sec:eq-structmech}.
Then in section \ref{sec:fromnewton}, we start by presenting some ideas of quasi Newton type algorithm for solving equilibrium of structures in the static case with some Lipschitz non linearities; for example, these non linearities arise in cases where structures are subjected to unilateral springs; these unilateral springs are often simplified models of structures like bumpers usually made of viscoelastic materials like in \cite{hazim-ecssmt}; the same model will be addressed in the dynamic case.
We derive a constructive proof of existence of periodic solutions for a one degree of freedom system in section \ref{sec:onedof}; then the case of several degrees of freedom is addressed in section \ref{sec:vibrat-several}.

Vibrations with non linearities have been considered from an experimental point of view in \cite{vdb-lagier-groby,vdb-lagier04,land_mine02,moussatov-castagnede-gusev,zaitsev_fillinger_gusev_c05}; asymptotic expansions and numerical methods have been used in \cite{rousselet-vdb03, brvdbfeijo06,rousselet-vdb-inria04,rousseletvdb05,vdbbr_saviac05,brvdbfeijo06,br-hammamet08,br-AFPAC,sj-brgdr08,benbrahim-tamtam09, benbrahimSmai,junca-br10,benbrahimGdrafpac}.

 The work of Lyapunov \cite{lyapunov49} is often cited as a basis for the existence of periodic solutions which tends towards linear normal modes as amplitudes tend to zero; this result uses the {\it hypothesis of analycity} of the function involved in the differential system. As we address the case where the non linearity is only Lipschitz, this hypothesis of analyticity is obviously violated.

Some general existence results, based on calculus of variations, for convex Hamiltonian systems are presented in \cite{ekeland-convhamilton}; the case of analytic functions in connexion with the use of normal forms is considered in \cite{jezequel-lamarque91, touze-amabili06, Iooss3}.
Energy pumping has been addressed in several papers, for example in \cite{schmidt-lamarque10}. 


 The case of vibration of structures with unilateral springs  or more generally systems modeled with Lipschitz non linearities are considered in some recent studies (experiments, asymptotic expansions, numerical computations): \cite{hazimSmai,hr-brgdr08, hazim-tamtam09, hazim-br08, hazim-ecssmt,hazim-these,sj-brgdr08, junca-br10, jiang-pierre-shaw04, vestroni08}. A study of dynamics of elastic shocks is in \cite{attouch-cabot-redont}.
The case of distributed systems modeled by partial differential equations has been addressed for example in \cite{ps4};  general references for perturbation methods are among many others  \cite{sanders-verhulst,nayfeh81} for differential equations and \cite{KC68} for partial differential equations.

A review of periodic solutions of non autonomous ordinary differential equations may be found in \cite{DBLP:Mawhin09} for details, see also \cite{rm2-73}.

In the static case, for example, a stability result concerning the obstacle problem for a plate is considered in \cite{leger1}.

Non smooth optimization is an active field of research; we only cite some references connected to this research:  \cite{dontchev-rocka}, \cite{kunisch08}, \cite{leine-nijmeijer04} and the references therein. 

Here we intend to use methods from non smooth optimization to prove existence of periodic solutions ans to derive a numerical algorithm to find them; the precise description and implementation of the algorithm will be addressed in a forthcoming paper.




\section{Equations from structural mechanics}
\label{sec:eq-structmech}
Consider a spring-mass system with some weak broken springs which are
acting only in compression.  Let $u $ denote, the displacement of the
masses, and  $\gamma_j$ the strain of the springs; it  is related to the displacement by
$\gamma_j=u_{j+1}-u_j$ or in vector form with an incident matrix $B$
$$\gamma=B u$$
if we assume that the material is elastic linear, the stress-strain law is
$$\sigma=E \gamma + \epsilon E'( \gamma +d)_-$$
where $\epsilon$ is a small parameter; $d$ denotes some backlash; in other words for some springs in small compression or in traction, there is no induced stress.

In the static case, the force applied to each mass is denoted by $Y$ and we have the equilibrium equation
$$B^T \sigma=Y$$ 
or more explicitely $B^T E \gamma + \epsilon B^T E'( \gamma +d)_-=Y$ or
\begin{equation}
  \label{eq:Ku+eps=Y}
  B^T E B u + \epsilon B^T E'( Bu +d)_-=Y   
\end{equation}

  \begin{remark}
  \begin{itemize}
  \item 
Obviously this system is quite general; many situations of structural mechanics may be cast in this system by using finite elements. Moreover many other physical systems can be modeled with such non linearities.
  \item 
To solve this system, the idea is to use some quasi Newton method taylored to systems involving Lipschitz functions.
\item The proof of convergence relies on a fixed point method.

The principle of the  method may be found in the book of Dontchev-Rockafellar \cite{dontchev-rocka} as a way of proving the existence of an inverse of a Lipschitz function $f$ or an implicit theorem for Lipschitz functions.
\item 
Here the function $f(X)$ is the left hand side and we want to solve $f(X)=Y$ for some values of $Y$;
note that here $f$ is only a Lipschitz function; it is not continuously differentiable!
  \end{itemize}
  \end{remark}

\paragraph{ Example with 5 masses and 4 springs on a straight line} 

The incident matrix 
is
\begin{displaymath}
  B=
\left [ \begin{array}{ccccc}
    1 & -1 &0 &0   & 0  \\
    0 &1   &-1&  0 & 0  \\
    0  & 0  & 1&-1  & 0  \\
    0 &0   & 0& 1  &-1
  \end{array} \right ]
\end{displaymath}
As we have enforced no boundary condition, there is a rigid body movement: $u_j= c$ for $j=1, \dots 5$ gives the strain $\gamma=0$; for example, we can enforce $u_0=0$ (the mass 0 is attached) and remove it from the degrees of freedom; so we get
\par
\begin{displaymath}
  B=
\left [ \begin{array}{cccc}
      -1 &0 &0   & 0  \\
     1   &-1&  0 & 0  \\
       0  & 1&-1  & 0  \\
     0   & 0& 1  &-1
  \end{array} \right ]
\end{displaymath}
In the case where the only  broken spring is the second:
\begin{displaymath}
 B^T E'( Bu +d)_-= 
\left [
  \begin{array}{c}
  -E'_2(u_3-u_2 +d)_- \\
   E'_2(u_3-u_2 +d)_- \\
   0\\
   0
  \end{array} \right ]
\end{displaymath}

\paragraph{Example with weak broken springs at supports on a straight line} \label{par:ex-broken-supp}
\setlength{\unitlength}{1cm}
\begin{figure} 
  \centering
\begin{picture}(21,4)
\thicklines
\put(-.3,0){\framebox(.4,2){}}
\thinlines
\put(.3,1.2){\line(1,-2){0.2}} 
\multiput(.5,.8)(.4,0){4}{\line(1,4){.2}}
\multiput(.7,1.6)(.4,0){3}{\line(1,-4){0.2}} 
\put(1.9,1.6){\line(1,-2){0.2}} 
\put(2.1,1.2){\line(1,0){0.4}} 
\put(2.4,1.2){\circle*{.4}}

\put(2.6,1.2){\line(1,0){0.3}} 
\put(2.9,1.2){\line(1,-2){0.2}} 
\multiput(3.1,.8)(.4,0){4}{\line(1,4){.2}}
\multiput(3.3,1.6)(.4,0){3}{\line(1,-4){0.2}} 
\put(4.5,1.6){\line(1,-2){0.2}} 
\put(4.7,1.2){\line(1,0){0.4}} 
\put(5.,1.2){\circle*{.4}}

\put(5.2,1.2){\line(1,0){0.3}} 
\put(5.5,1.2){\line(1,-2){0.2}} 
\multiput(5.7,.8)(.4,0){4}{\line(1,4){.2}}
\multiput(5.9,1.6)(.4,0){3}{\line(1,-4){0.2}} 
\put(7.1,1.6){\line(1,-2){0.2}} 
\put(7.3,1.2){\line(1,0){0.4}} 
\put(7.6,1.2){\circle*{.4}}

\put(7.8,1.2){\line(1,0){0.3}} 
\put(8.1,1.2){\line(1,-2){0.2}} 
\multiput(8.3,.8)(.4,0){4}{\line(1,4){.2}}
\multiput(8.5,1.6)(.4,0){3}{\line(1,-4){0.2}} 
\put(9.7,1.6){\line(1,-2){0.2}} 
\put(9.9,1.2){\line(1,0){0.4}} 
\put(10.2,1.2){\circle*{.4}}

\put(10.4,1.2){\line(1,0){0.3}} 
\put(10.7,1.2){\line(1,-2){0.2}} 
\multiput(10.9,.8)(.4,0){4}{\line(1,4){.2}}
\multiput(11.1,1.6)(.4,0){3}{\line(1,-4){0.2}} 
\put(12.3,1.6){\line(1,-2){0.2}} 
\put(12.5,1.2){\line(1,0){0.4}} 
\put(12.8,1.2){\circle*{.4}}

\put(13,1.2){\line(1,0){0.3}} 
\put(13.3,1.2){\line(1,-2){0.2}} 
\multiput(13.5,.8)(.4,0){4}{\line(1,4){.2}}
\multiput(13.7,1.6)(.4,0){3}{\line(1,-4){0.2}} 
\put(14.9,1.6){\line(1,-2){0.2}} 
\thicklines
\put(15.3,0){\framebox(.3,2){}}
\end{picture}
\label{fig:5mass}
\caption{5 masses and 6 springs broken at supports}
\end{figure}
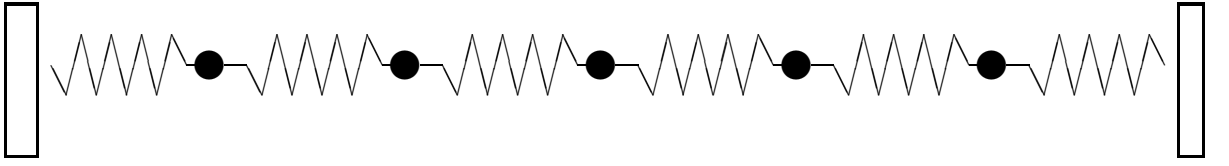

We consider the case where the springs are broken at both ends; then for $ n$ masses, we have $n+1$ springs;
\begin{gather}
  \gamma_1=u_1 , \; \gamma_j=u_j-u_{j-1}, \; \gamma_{n+1}=-u_n
\end{gather}
For $n=5$, we get (see figure \ref{fig:5mass})

\begin{displaymath}
  B=
\left [ \begin{array}{ccccc}
    1 & 0 &0 &0   & 0  \\
    -1 &1   & 0&  0 & 0  \\
    0  & -1  & 1& 0 & 0  \\
    0 &0   & -1& 1  & 0 \\
   0 &0   & 0 &-1 & 1 \\
    0 &0   & 0 & 0&-1
  \end{array} \right ]
\end{displaymath}
In the case of broken springs at both ends, 
\begin{gather}
  \sigma_1=E'_1 (\gamma_{1}+d_1)_-, \; \sigma_j= E_j \gamma_j, \; \text{ for } j=2 \dots 5, \; \sigma_6=E'_6 (\gamma_{6}+d_6)_-
\end{gather}
so that, the equilibrium equation are:
\begin{gather}
  K u + B^TE'(Bu+d)_{-} =F \quad 
\end{gather}
with
\begin{gather}
  K=B^T E B, \quad \text{ the rigidity matrix, and} \\
[B^TE'(Bu+d)_{-}]_1=E'_1(u_1+d_1)_- \\
[B^TE'(Bu+d)_{-}]_j=0, \quad \text{ for } \quad j=2, \dots 4, \quad
[B^TE'(Bu+d)_{-}]_5=E'_6(-u_5+d_5)_- 
\end{gather}
\begin{displaymath}
K=
\begin{bmatrix}
  E_2 & -E_2 &0&0&0 \\
  -E_2 & E_2+E_3 & -E_2 &0&0\\
  0 &-E_3 & E_3+E_4 & -E_4 &0\\
  0 &0& -E_4 & E_4+E_5 & -E_5  \\
  0&0&0&                   -E_5& E_5 
\end{bmatrix}
\end{displaymath}
\begin{remark}
  \begin{enumerate}
  \item We notice that $K$ is not invertible; see Remark \ref{rem:natural}
  \item Obviously, we can easily extend this example to a bi or tridimensional array of masses and springs, with unilateral springs or to non linear springs with lipschitz non linearity..
  \end{enumerate}

\end{remark}

  \section{From a classical Newton method to quasi Newton method with a Lipschitz function}
\label{sec:fromnewton}
In this introductory section, we recall a constructive convergence proof of the Newton-Raphson algorithm based on the fixed point method; then we extend the approach to a quasi Newton method for a smooth function; finally, we extend it to a sum of a smooth and a lipschitz function.
  
\subsection{Newton and Quasi Newton methods in the smooth case}

\subsubsection{Newton method}
When $f$ is differentiable, in order to solve an equation $f(x)=y $,  the idea is to approximate $f$ by its differential:
we approach $f(x_{k+1})$ by $f(x_k) +f'(x_k)(x_{k+1} -x_k)$ so that 
$x_{k+1}$ is the solution of:
$$f(x_k) +f'(x_k)(x_{k+1} -x_k) =y$$ \label{eqref:newtonk}

To prove the convergence, we consider the equation for the previous index
$$f(x_{k-1}) +f'(x_{k-1})(x_{k} -x_{k-1}) =y$$
and manipulating, we obtain
$$ f'(x_k)(x_{k+1} -x_k)=-[f(x_k) - f(x_{k-1}) -f'(x_{k-1})(x_{k} -x_{k-1}) ] $$
or 
$$(x_{k+1} -x_k)=- [f'(x_k)]^{-1} \;[f(x_k) - f(x_{k-1}) -f'(x_{k-1})(x_{k} -x_{k-1}) ] $$
Now, it is clear that to show convergence of this sequence, we need mainly the 

{\bf Assumptions}
\begin{itemize}
\item 

$\| f'(x)^{-1}\| \le c $ \label{eqref:f'm1c} in a neighborhood of the solution;

\item 
and an easy consequence of differentiability of $f$ in a neighborhood of $\bar{x}$ 

for any $\epsilon >0 $, there exists $\delta$ such that for $\|x_k-\bar{x}\| \le \delta, \;\|x_{k+1}-\bar{x}\| \le \delta $
$$\|f(x_k) - f(x_{k-1}) -f'(x_{k-1})(x_{k} -x_{k-1}) \| < \epsilon \|x_{k} -x_{k-1} \|$$
\end{itemize}
we choose $\epsilon=\frac{1}{2c}$ and we obtain

$$ \|(x_{k+1} -x_k)\| \le \demi  \|(x_{k} -x_{k-1})\| $$
  \begin{itemize}
  \item 

which proves convergence (if the initial data is close enough to the solution itself close enough to the given $\bar{x}$).
\item 

In this method, it is well known that the main trouble is the inequality  
 $$\| f'(x)^{-1}\| \le c$$
 if $c$ is  large, we have to start very close to the solution in order to converge;
 the ``tangent subspace'' to the graph of $f$ near the solution should not be ``too horizontal''.
A way to circumvent this trouble is to embed the Newton method in a continuation process.
\end{itemize}
\subsubsection{Quasi Newton method}
The idea of a quasi Newton method is to evaluate the derivative of $f$ at a fixed point $\bar{x}$.
Still considering the case where $f$ is differentiable, instead of 
\ref{eqref:newtonk}, we use
$$f(x_k) +f'(\bar{x})(x_{k+1} -x_k) =y$$
in which the derivative of $f$  is evaluated at the fixed point $\bar{x}$.
To prove convergence, we consider the same equation for the previous index
$$f(x_{k-1}) +f'(\bar{x})(x_{k} -x_{k-1}) =y$$
manipulating, we get:

$$ f'(\bar{x})(x_{k+1} -x_k)=-[f(x_k) - f(x_{k-1}) -f'(\bar{x})(x_{k} -x_{k-1}) ] $$
We use the same assumption on the inverse of the derivative $f'$ and still with differentiability in the neighborhood of $\bar{x}$, we get
for any $\epsilon > 0$
$$\|f(x_k) - f(x_{k-1}) -f'(\bar{x})(x_{k} -x_{k-1}) \| < \epsilon \|x_{k} -x_{k-1} \|$$
then, from
$$(x_{k+1} -x_k)=- [f'(\bar{x})]^{-1} \;[f(x_k) - f(x_{k-1}) -f'(x_{k-1})(x_{k} -x_{k-1}) ] $$
with the same assumptions, we still get
$$ \|(x_{k+1} -x_k)\| \le \demi  \|(x_{k} -x_{k-1})\| $$
which proves convergence (if the initial data is close enough to the solution itself close enough to the given $\bar{x}$).

\subsection{Quasi Newton method in a Lipschitz case}
\label{subsec:quasi-lip}
We use some notions introduced in Dontchev-Rockafellar \cite{dontchev-rocka}.

 We no longer assume that $f$ is differentiable but that there exists   a {\bf strict estimator } $h$ of $f$ at $\bar{x}$: 
 \begin{definition}
   \begin{enumerate}
   \item 
We recall that  the Lipschitz modulus of $e$ at $\bar{x}$ is
$$lip(e,\bar{x})= \limsup_{x,x' \rightarrow \bar{x}, x \ne x'} \frac{\|e(x)-e(x') \|}{\|x-x'\|} $$
\item 
$h$ is a strict estimator of $f$ at $\bar{x}$, when
   $$f(x)= h(x) + e(x)$$ 
with the Lipschitz modulus of $e$ satisfying $$lip(e,\bar{x}) \le \mu < +\infty$$ 
   \end{enumerate}
 \end{definition}

We assume that 
$$ \bar{y}= f(\bar{x})$$

Moreover, we assume that $h$ is invertible around $\bar{y}$ for $\bar{x}$

 and 
 \begin{equation}
   \label{eq:liph-1}
lip(h^{-1}, \bar{y}) \le \kappa  
 \end{equation}


 \begin{remark}
   It should be pointed out that it stems directly from the previous definition that the assumption, the Lipschitz modulus at $\bar x$ of a function  be finite,  implies that this function is Lipschitz in a neighborhood of $\bar x$.
 \end{remark}
\subsubsection{ Examples}
\label{subsub:examples}
\begin{itemize}
\item $f(x)=(x)_-$, $lip(f, x)= 1$ for $x \le 0$ but  $lip(f, x)= 0$ for $x > 0$
\item $e(x)= x_- - \alpha x$ with $0 \le \alpha \le 1$
$$lip(e,x) \le \max(\alpha, 1- \alpha)$$
\item $e=(Bu+ d)_- - \Lambda (Bu+d) $ where $\Lambda$ is a {\bf diagonal} matrix with $0 \le \lambda_j \le 1$.

set $d_j(u)=\sum B_{jk}u_k +d_j$

as $e_j(u)-e_j(u')=d_{j-}-\lambda_j d_j$, we get

$|e_j(u)-e_j(u')| \le \max(\Lambda_j,1-\lambda_j)|d_j(u)-d_j(u')| \le  \max(\lambda_j,1-\lambda_j) \|B\| \|u-u'\| $
$$lip(e,x) \le \max_j\max(\lambda_j, 1- \lambda_j)\|B\|$$

\end{itemize}

consider now the case with 
\begin{gather}
\label{eq:f(u)=}
f(u)=   B^T E B u + \epsilon B^T E' (Bu+ d)_- \;.
\end{gather}

It is the left hand side of our example from structural mechanics; as the first term is linear, the function $e$ is 
$e=\epsilon [(Bu+ d)_- - \Lambda (Bu+d)] $
and $$lip(e,x) \le \epsilon \max_j\max(\lambda_j, 1- \lambda_j) \|B\|$$
\begin{gather}
\label{eq:h(u)=}
h(u)=  B^T E B u + \epsilon B^T E'\Lambda (Bu+ d) 
\end{gather}

is invertible when the rigidity  matrix $K_{\epsilon}= B^T E B  + \epsilon B^T E' B$ is invertible and its inverse is:
$$h^{-1}(y)= K_{\epsilon}^{-1} [y -  B^T E' \Lambda d] $$
and $$lip(h^{-1}, y) \le \| K_{\epsilon}^{-1} \|$$

So when $K_{\epsilon}$ is invertible, for $\epsilon$ small enough, $$lip(e,x) lip(h^{-1}, y) < 1$$
 this property is crucial for convergence but it should be pointed out that 
$K_0=B^T E B$ is not required to be invertible.

\subsubsection{Algorithm}

Now the idea is to build a sequence  to solve
$$ f(x)=y$$ by using the strict estimator $h$:
$$ h(x_{k+1}) +e(x_{k+1})= y$$
to compute $x_{k+1}$ solution of this equation  is as difficult as the initial problem but it may be approximated by replacing $e(x_{k+1})$ by   $e(x_{k})$
$$ h(x_{k+1}) +e(x_{k})= y$$
to prove convergence, we use the formula at the previous iteration
\begin{equation}
  \label{eq:hkek-1}
 h(x_{k}) +e(x_{k-1})= y
\end{equation}

we get
$$ x_{k+1}=  h^{-1}(y- e(x_k) ) \quad  x_{k}=  h^{-1}(y- e(x_{k-1}) ) $$
and
$$x_{k+1}=x_{k} +  h^{-1} (y- e(x_{k}) ) - h^{-1} (y-e(x_{k-1}) )$$
\label{eqref:xk+1-xk}

\begin{itemize}
\item 
with the {\bf assumption} \eqref{eq:liph-1}, we get that for $e(x_{k})$ and $e(x_{k-1})$ small enough and $y$ close enough to $\bar{y}$,
$$\| h^{-1}[ y- e(x_{k}) ] - h^{-1} [y-e(x_{k-1}) ] \le \lambda \|  - e(x_{k}) + e(x_{k-1}) \| $$
\item 
on the other hand, $h$ being a strict estimator:
$$\|   e(x_{k}) - e(x_{k-1}) \|  \le  \nu \|x_{k} - x_{k-1} \| $$
\item 
moreover, we assume that $\mu \; \kappa < 1$ so that for $x_{k}$ and  $ x_{k-1}$ close enough to $\bar{x}$, we may assume that $\lambda \; \nu < 1$ 
\item 
From \ref{eqref:xk+1-xk}, we get 
$$  \|x_{k+1} - x_{k} \|  \le \lambda  \;  \nu \|x_{k} - x_{k-1} \| $$
Usually, we have to show that  if $x_0$ is close enough to $\bar x$, $x_1$ lies in a ball in which $h^{-1}$ is Lipschitz; here it is obvious;
 it proves the geometric convergence when 
$$ \lambda  \;  \nu  < 1$$

\end{itemize}

We have obtained
\begin{proposition}
Consider the equation 
\begin{equation}
  \label{eq:fx=y}
 f(x)=y 
\end{equation}

  Assume that 
$$f(x)= h(x) + e(x)$$ with the Lipschitz modulus satisfying
$$lip(e,\bar{x}) \le \mu < +\infty
\quad lip(h^{-1}, \bar{y}) \le \kappa < +\infty $$ 
with $\mu \kappa <1$, $f(\bar x)=\bar y$ and $e(\bar x)=0$

then the sequence
$$ h(x_{k+1}) +e(x_{k})= y$$ with $x_0$ is given close to the solution,
converges geometrically to the solution of \eqref{eq:fx=y} close to $x_0$.
\end{proposition}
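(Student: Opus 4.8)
The plan is to recognize the iteration as a Banach fixed point scheme for the map $\Phi(x):=h^{-1}\bigl(y-e(x)\bigr)$, whose fixed points are precisely the solutions of $f(x)=y$ lying in the region where $h^{-1}$ is the relevant single-valued Lipschitz localization (because there $h(x)+e(x)=y$ is equivalent to $x=h^{-1}(y-e(x))$). First I would fix the contraction ratio: since $\mu\kappa<1$, choose $\nu>\mu$ and $\la>\kappa$ with $\la\nu<1$. By the definition of the Lipschitz modulus there is a radius $r>0$ on whose ball $\{\norm{x-\bar x}\le r\}$ the map $e$ is Lipschitz with constant $\nu$ — so, using $e(\bar x)=0$, $\norm{e(x)}\le\nu\norm{x-\bar x}$ there — and a radius $\rho>0$ on whose ball $\{\norm{z-\bar y}\le\rho\}$ the localization $h^{-1}$ is single-valued and Lipschitz with constant $\la$; note that $h(\bar x)=\bar y$ gives $\bar x=h^{-1}(\bar y)$.

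Next I would quantify ``close enough''. Pick $\de\in(0,r]$ with $\nu\de\le\rho/2$, and restrict the data by $\norm{y-\bar y}\le\eta$ where $\eta:=\min\{(1-\la\nu)\de/\la,\ \rho/2\}$. On the closed ball $\overline B:=\{\norm{x-\bar x}\le\de\}$ the map $\Phi$ is then well defined, because $\norm{y-e(x)-\bar y}\le\norm{y-\bar y}+\nu\de\le\rho$; moreover for $x,x'\in\overline B$ one gets at once
\[
\norm{\Phi(x)-\Phi(x')}\le\la\,\norm{e(x)-e(x')}\le\la\nu\,\norm{x-x'},
\]
and
\[
\norm{\Phi(x)-\bar x}=\norm{h^{-1}(y-e(x))-h^{-1}(\bar y)}\le\la\bigl(\norm{y-\bar y}+\nu\de\bigr)\le\la\Bigl(\frac{(1-\la\nu)\de}{\la}+\nu\de\Bigr)=\de .
\]
Thus $\Phi$ is a $\la\nu$-contraction of $\overline B$ into itself; the Banach fixed point theorem yields a unique $x^\star\in\overline B$ with $f(x^\star)=y$, and for any $x_0\in\overline B$ the iterates $x_{k+1}=\Phi(x_k)$ satisfy $\norm{x_k-x^\star}\le(\la\nu)^k\norm{x_0-x^\star}$ — the announced geometric convergence. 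Equivalently, I could avoid invoking the theorem and simply telescope the one-step bound $\norm{x_{k+1}-x_k}\le\la\nu\norm{x_k-x_{k-1}}$ obtained above to $\norm{x_{k+1}-x_k}\le(\la\nu)^k\norm{x_1-x_0}$, conclude that $(x_k)$ is Cauchy and stays in $\overline B$, and pass to the limit in $x_{k+1}=\Phi(x_k)$ using the continuity of $\Phi$.

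The one step that requires genuine care — and which is brushed aside as ``obvious'' in the informal derivation preceding the statement — is the \emph{invariance of the ball}: the one-step estimate $\norm{x_{k+1}-x_k}\le\la\nu\norm{x_k-x_{k-1}}$ is only licit while both $x_k$ and $x_{k-1}$ lie where the bounds $lip(e,\cdot)\le\nu$ and $lip(h^{-1},\cdot)\le\la$ apply, so one cannot iterate it without first knowing that \emph{every} $x_k$ stays in $\overline B$. The self-mapping computation above is exactly what supplies this, at the cost of shrinking $\de$ and tightening $\eta$, and it delivers local uniqueness of the solution as a byproduct. A minor secondary point is to read the hypothesis ``$h$ invertible around $\bar y$ for $\bar x$, $lip(h^{-1},\bar y)\le\kappa$'' as the existence of a single-valued Lipschitz localization of $h^{-1}$ at $(\bar y,\bar x)$, as in Dontchev--Rockafellar; the argument uses only this localized property, which is why $K_0=B^{T}EB$ need not be invertible in the structural-mechanics application.
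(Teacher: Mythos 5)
Your proof is correct and follows the same fixed-point/contraction strategy as the paper: the paper derives the identical one-step bound $\|x_{k+1}-x_k\| \le \lambda\nu\|x_k-x_{k-1}\|$ from $x_{k+1}-x_k = h^{-1}(y-e(x_k))-h^{-1}(y-e(x_{k-1}))$ and telescopes it. Where you genuinely improve on the paper is precisely the point you single out at the end: the paper merely asserts that the iterates remain in the ball where the Lipschitz bounds are valid (``here it is obvious''), whereas you supply the quantitative self-mapping estimate for $\Phi(x)=h^{-1}(y-e(x))$, including the explicit shrinking of $\delta$ and restriction on $\|y-\bar y\|$, which is exactly what is needed to make the iterated contraction licit and to obtain local uniqueness as a byproduct.
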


This is the case for our example from structural mechanics when $\epsilon$ is small enough.

\paragraph{Continuation process}

 This method may be embedded in a continuation process:
\begin{itemize}
\item 
Set a step size $\delta \epsilon$ and an initial value of $\epsilon_1$ and a starting point $\tilde{x_1}$:
\item For l from 1 to L
\item 
Use the non smooth quasi Newton method to find the solution $x_l^*$ of  the equation for $\epsilon_l $ with the starting point $\tilde{x}_l$ ;
\item increase $\epsilon_{l+1}=\epsilon_l + \delta \epsilon$ and set $\tilde{x}_{l+1}=x_l^*$
\item end For
\end{itemize}
Extensions for differential equations are considered below; control problems are possible extensions.

\paragraph{An example}
Consider the iterations \eqref{eq:hkek-1} for the example of broken springs at both ends \eqref{par:ex-broken-supp}, we get with \eqref{eq:f(u)=} and \eqref{eq:h(u)=}:
\begin{gather} \label{eq:iter-broken-s}
  Ku^{k+1}+\epsilon B^TE'\Lambda(Bu^{k+1}+d) =-\epsilon \left (B^TE'(Bu^{k}+d)_- - B^TE'\Lambda(Bu^{k}+d) )\right)+F
\end{gather}
We notice that the matrix of the left hand side is the sum of $K$ and $\epsilon B^TE'\Lambda B$; it is usually invertible.
\begin{remark} \label{rem:natural}
  Had we use the ``natural'' iterative method
  \begin{gather}
     Ku^{k+1} =-\epsilon B^TE'\Lambda(Bu^{k}+d) +F
  \end{gather}

we notice that the matrix $K$ is {\bf not } invertible so that this algorithm is not working at all; in cases where the smallest eigenvalue of $K$ is not zero but small as in problem arising in large aerospace structures, the algorithm will converge for $\epsilon$ quite smaller than with the previous algorithm \eqref{eq:iter-broken-s}

It could be proved that the matrix is better conditioned with the ``quasi Newton'' algorithm than with the natural algorithm.
\end{remark}

\section{Periodic solution of a one degree of freedom spring-mass  system with Lipschitz non linearity}
\label{sec:onedof}

\subsection{The differential equation, periodic solution}

We set 
\begin{equation}
  \dot{\tilde{x}}=\frac{d \tilde{x}}{dt}
\end{equation}
and we  consider a one d.o.f. spring-mass system with a linear  and a weak linear unilateral spring:
\begin{equation}
\label{eq:mx"+}
  m \ddot{\tilde{x}} + k \tilde{x} + \epsilon k (\tilde{x})_-=0 \text{ with } x_-=\frac{x-|x|}{2}
\end{equation}
or more generally a linear and a weak non linear spring
\begin{equation}
\label{eq:mx"+g}
  m \ddot{\tilde{x}} + k \tilde{x} + \epsilon k g(\tilde{x})=0
\end{equation}
We set $\omega^2=\frac{k}{m}$, so this equation may be written
\begin{equation} \label{eq:xomega2}
   \ddot{\tilde{x}} + \omega^2 \tilde{x} + \epsilon \omega^2 (\tilde{x})_-=0
\end{equation}
As we suspect that the frequency of a periodic solution depends on $\epsilon$, we perform the change of variable:
$$\theta=\omega_{\epsilon} t$$ with 
$$\frac{1}{\omega_{\epsilon}^2} =\frac{1-\epsilon \eta(\epsilon)}{\omega{^2}}$$
\begin{remark}
  This is a  classical change of variable, e.g. see  \cite{verhulst90}; it is used in the method of strained coordinates also called Linsted-Poincarr\'e; in particular, it has been used in \cite{junca-br10} to derive an asymptotic expansion for a quite similar system of differential equations. Instead, here we use it  to derive a { \it constructive } proof of existence of a periodic solution; we are only looking for periodic solutions whereas in \cite{junca-br10}, we can obtain a quasi periodic expansion.
\end{remark}

and we set
\begin{equation}
  x'=\frac{dx}{d\theta}
\end{equation}
If we set $x(\theta)=\tilde{x}(t)$, equation \eqref{eq:xomega2} may be written:
\begin{align}
  \label{eq:xeta}
 & x"+(1-\epsilon \eta)x+\epsilon(1-\epsilon \eta)x_-=0 \quad \text{ and in the general case}\\
 & x"+(1-\epsilon \eta)x+\epsilon(1-\epsilon \eta)g(x)=0
\end{align}
or
\begin{equation}
  \label{eq:xfeta}
  x"+x +\epsilon f(x,\eta,\epsilon)=0
\end{equation}
with
\begin{align}
  \label{eq:f=}
 & f(x,\eta,\epsilon)=-\eta x +(1-\epsilon \eta)x_- \quad
\text{ and in the general case }\\
  \label{eq:f=...g}
 & f(x,\eta,\epsilon)=-\eta x +(1-\epsilon \eta) g(x)
\end{align}
As the solution of \eqref{eq:xfeta} is:
\begin{equation}
  \label{eq:xintegrale}
  x=a cos(\theta) + b sin(\theta) -\epsilon \int_0^{\theta}sin(\theta-s)f(x(s),\eta,\epsilon)ds
\end{equation}
or
\begin{gather}
  x=a cos(\theta) + b sin(\theta) - \epsilon r(\theta, a,\epsilon,\eta,b)
\end{gather}
we obtain the following lemma which may be considered as an extension to the Lipschitz case of a classical result which, for example, may be found in \cite{verhulst90}.
\begin{lemma}
\label{lem:rdrF1F2}
Consider the solution $\tilde x$ of the Cauchy problem of equation \eqref{eq:mx"+} (resp. \eqref{eq:mx"+g} ) and the solution $x$ of the associated equation \eqref{eq:xfeta}, \eqref{eq:f=} (resp. \eqref{eq:f=...g}) after change of variable.
For $\epsilon$ close to zero, $\tilde{x}$ is a  solution of \eqref{eq:mx"+} (resp. \eqref{eq:mx"+g} ) of period $2\pi/\omega_{\epsilon}$ if and only if $x$ is a   solution of \eqref{eq:xfeta}, \eqref{eq:f=} (resp. \eqref{eq:f=...g}) and 
  \begin{equation}
\label{eq:F=0}
r(2 \pi, a,\epsilon,\eta,b)=0 \quad \text{ and } \quad \frac{\p r(2 \pi, a,\epsilon,\eta,b)}{\p \theta} =0 \quad \text{ or equivalently } \quad F(p,y)=0    
  \end{equation}

 with
\begin{equation}
  \label{eq:p,y=}
  p=(a,\epsilon), \; y=(\eta,b)
\end{equation}
\begin{align}
\label{eq:F1=}
  F_1(p,y) &=\int_0^{2\pi}sin(s) f(x,\eta,\epsilon) ds \\
\label{eq:F2=}
  F_2(p,y) &=\int_0^{2\pi}cos(s) f(x,\eta,\epsilon) ds 
\end{align}
\end{lemma}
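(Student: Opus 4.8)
The plan is to chain together three elementary reductions, the only delicate one being the passage from $2\pi$-periodicity of $x$ to a two-point boundary condition. \emph{Step 1 (the change of variable).} For $\epsilon$ close to zero one has $1-\epsilon\eta(\epsilon)>0$, so $\omega_\epsilon>0$ is well defined and $t\mapsto\theta=\omega_\epsilon t$ is an increasing bijection of $\ER$. Since $\tilde x(t)=x(\omega_\epsilon t)$ by construction, $\tilde x$ has period $2\pi/\omega_\epsilon$ in $t$ if and only if $x$ has period $2\pi$ in $\theta$, and $x$ automatically solves \eqref{eq:xfeta} (that is how it is obtained from $\tilde x$). Hence the claim reduces to: $x$ is a $2\pi$-periodic solution of \eqref{eq:xfeta} if and only if \eqref{eq:F=0} holds.

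\emph{Step 2 (periodicity as a two-point condition).} Here I would use that \eqref{eq:xfeta} is autonomous and that $f(\cdot,\eta,\epsilon)$ is Lipschitz in its first argument — this is precisely the content of the Examples of Section \ref{subsub:examples} for $f(x,\eta,\epsilon)=-\eta x+(1-\epsilon\eta)x_-$, and is assumed for $g$ — so that, rewriting \eqref{eq:xfeta} as a first-order system in $(x,x')$, the associated Cauchy problem has a unique solution. Consequently a solution $x$ of \eqref{eq:xfeta} is $2\pi$-periodic if and only if $x(2\pi)=x(0)$ and $x'(2\pi)=x'(0)$: the "only if" is trivial, and for the "if" one notes that $\theta\mapsto x(\theta+2\pi)$ solves \eqref{eq:xfeta} with the same data at $\theta=0$ as $x$, hence coincides with $x$.

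\emph{Step 3 (reading off the boundary values and converting to $F$).} Comparing \eqref{eq:xintegrale} with $x=a\cos\theta+b\sin\theta-\epsilon r$ gives $r(\theta,\ldots)=\int_0^\theta\sin(\theta-s)f(x(s),\eta,\epsilon)\,ds$, and differentiating (the boundary term vanishes since $\sin 0=0$) $\partial r/\partial\theta(\theta,\ldots)=\int_0^\theta\cos(\theta-s)f(x(s),\eta,\epsilon)\,ds$. Therefore $x(0)=a$, $x'(0)=b$, while $x(2\pi)=a-\epsilon\,r(2\pi,\ldots)$ and $x'(2\pi)=b-\epsilon\,\partial r/\partial\theta(2\pi,\ldots)$, so the two-point condition of Step 2 is exactly $r(2\pi,a,\epsilon,\eta,b)=0$ and $\partial r/\partial\theta(2\pi,a,\epsilon,\eta,b)=0$. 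Finally $\sin(2\pi-s)=-\sin s$ and $\cos(2\pi-s)=\cos s$ yield $r(2\pi,\ldots)=-F_1(p,y)$ and $\partial r/\partial\theta(2\pi,\ldots)=F_2(p,y)$ with $p,y$ as in \eqref{eq:p,y=} and $F_1,F_2$ as in \eqref{eq:F1=}, \eqref{eq:F2=}, so the two conditions are equivalent to $F(p,y)=0$.

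The \textbf{main obstacle}, modest as it is, lies in Step 2: one must invoke uniqueness for the Cauchy problem, which is exactly where the Lipschitz hypothesis on the nonlinearity enters and what makes this an "extension to the Lipschitz case" of the classical result. The remaining points are routine verifications: that $1-\epsilon\eta(\epsilon)>0$ for $\epsilon$ small, and that $s\mapsto f(x(s),\eta,\epsilon)$ is continuous — a consequence of the continuity of $x$ and of $x\mapsto x_-$ (resp. $g$) — so that the differentiation under the integral sign used for $\partial r/\partial\theta$ is legitimate.
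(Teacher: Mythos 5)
Your proof is correct, and it follows the natural route that the paper's setup suggests. The paper in fact gives no explicit proof of this lemma (it simply presents the variation-of-constants formula \eqref{eq:xintegrale} and cites Verhulst for the classical smooth analogue), so you have supplied precisely the missing details: the change of variable $\theta=\omega_\epsilon t$ turns $2\pi/\omega_\epsilon$-periodicity of $\tilde x$ into $2\pi$-periodicity of $x$; the autonomous equation \eqref{eq:xfeta} with Lipschitz right-hand side has a unique Cauchy solution, so $2\pi$-periodicity is equivalent to the two-point conditions $x(2\pi)=x(0)$, $x'(2\pi)=x'(0)$; and evaluating \eqref{eq:xintegrale} and its $\theta$-derivative at $0$ and $2\pi$, together with $\sin(2\pi-s)=-\sin s$ and $\cos(2\pi-s)=\cos s$, converts these into $r(2\pi,\ldots)=0$, $\partial_\theta r(2\pi,\ldots)=0$, i.e.\ $F_1=F_2=0$ up to an irrelevant sign on $F_1$. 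Your explicit identification of where the Lipschitz hypothesis is actually used — namely uniqueness for the Cauchy problem, which is what makes the ``extension to the Lipschitz case'' legitimate — is the one point the paper leaves entirely implicit and is worth stating as you do.
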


\subsection{Computation of  $\eta(0)$}

\paragraph{Case $x_-$}
We have
$$ f(x,\eta,0)=-\eta x +x_-$$ and for $b=0$, 
$$f(x,\eta,0)=-\eta a cos(\theta)+(a cos(\theta))_-$$
 so 
\begin{align}
  F_1 &= \int_0^{2\pi} sin(s)(a cos(s)))_- ds\\
  F_2 &= \int_0^{2\pi} -\eta a cos(s)^2  +cos(s)(a cos(s)))_- ds
\end{align}
we note that  $F_1$ is identically zero; 
we remark that
\begin{gather}
  \text{if } a \ge 0 , \;(a cos(s))_-=a(cos(s))_- \text{ and if }  a \le 0 , \;(a cos(s))_-=a(cos(s))_+
\end{gather}
in both cases,  we get from $F_2$ 
$$\eta(0)=\frac{1}{2}$$ 
by using for example
\begin{gather}
   |\cos(s)| =  \frac{2}{\pi} -
      \frac{4}{\pi} \sum_{k=1}^{+\infty} \frac{(-1)^k}{4k^2-1}\cos(2ks),
\end{gather}
and by anticipating that $\eta$ is a Lipschitz function of $\epsilon$ (see proposition \ref{prop:periosol-1dof} )
$$\frac{1}{\omega_{\epsilon}^2 }=\frac{1-\epsilon/2+ o(\epsilon)}{\omega{^2}}$$
and
$$ x=a cos(\theta) + b sin(\theta) - \epsilon r(\theta, a,0,\eta(0),0) +o(\epsilon)$$
This result might be obtained by direct inspection of the level curves of the energy associated to the differential equation, see \cite{junca-br10}; but this approach may be extended to systems of differential equations.

\paragraph{General case}
For simplicity, we consider $b=0$, the equation $F_1(p,y)=0$ is identically satisfied.
The second equation yields:
\begin{equation}
  \eta(0)=\frac{1}{a \pi}\int_0^{2 \pi}cos(s)g(a cos(s))ds
\end{equation}

\subsection{Perturbation of the solution $x$ with respect to $y=(\eta, b)$ }
\begin{lemma} \label{lemm:gx2-gx1}
  Assume that $x$ is solution of \eqref{eq:xfeta} \eqref{eq:f=...g} with $f$ and $g$ Lipschitz with respect to all variables; 
  \begin{gather}
   | g(x_2)-g(x_1)| \le k|x_2-x_1|
  \end{gather}
  \begin{equation}
    \label{eq:f-lip}
    |f(x_2,\eta_2,\epsilon_2)-f(x_1,\eta_1,\epsilon_1)| \le k \left ( |x_2-x_1|+|\eta_2-\eta_1|+|\epsilon_2-\epsilon_1|   \right )
  \end{equation}
assume that the initial data are:
  \begin{equation}
    \label{eq:x0x'0}
    x(0)=a_{\alpha}, \;  x'(0)=\epsilon b_{\alpha}, \quad \left( \text{resp.} \; x'(0)= b_{\alpha} \right) \quad \text{ with } \alpha=1,2
  \end{equation}
  then, $x$ and $F$ are Lipschitz with respect to $y=(\eta,b)$ 
and $p$ with a modulus of magnitude $\epsilon k \bigl( resp.\;  k \bigr)  $ (where $k < +\infty$ ):
  \begin{gather}
    \label{eq:xlip-b-eta}
\forall  \theta \in [0,2 \pi], \;   \|x_2(\theta) - x_1(\theta)\| \le \epsilon k (|b_2-b_1|+|\eta_2-\eta_1|) \text{ and }\\
\Bigl [\text{ resp. when the initial velocity  is not of order } \epsilon \\
\forall  \theta \in [0,2 \pi], \;   \|x_2(\theta) - x_1(\theta)\| \le  k (|b_2-b_1|+|\eta_2-\eta_1|) \; \Bigr], \quad \text{ and }\\
\forall  \theta \in [0,2 \pi], \;|g(x_2(\theta))-g(x_1(\theta))| \le \epsilon k (|b_2-b_1|+|\eta_2-\eta_1|)  \label{eq:glip-b-eta} \\
\text{but we have only} \\
      |f(x(\theta,y_2),\eta_2,\epsilon)-f(x(\theta,y_1),\eta_1,\epsilon)| \le k \left ( \epsilon|b_2-b_1| + |\eta_2-h_1| \right) \\
    \|F(p,y_2)-F(p,y_1)\| \le  k (|b_2-b_1|+|\eta_2-\eta_1|)  \label{eq:Flip-b-eta}
  \end{gather}
\end{lemma}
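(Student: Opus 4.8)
The plan is to reduce everything to the integral (variation–of–constants) form \eqref{eq:xintegrale} and then run a Gronwall / sup–norm estimate on the compact interval $[0,2\pi]$. First I would write, for the two sets of data \eqref{eq:x0x'0} (with $\alpha=1,2$, and for definiteness with the same $a$ and the same $\epsilon$),
\[
x_\alpha(\theta)=a\cos\theta+\epsilon b_\alpha\sin\theta-\epsilon\int_0^\theta\sin(\theta-s)\,f\bigl(x_\alpha(s),\eta_\alpha,\epsilon\bigr)\,ds,
\]
with $\epsilon b_\alpha$ replaced by $b_\alpha$ in the case $x'(0)=b_\alpha$, and then subtract the two identities to get an integral equation for $x_2-x_1$ whose inhomogeneous term is $\epsilon(b_2-b_1)\sin\theta$ (resp. $(b_2-b_1)\sin\theta$).

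Next I would estimate. Setting $M=\sup_{[0,2\pi]}|x_2-x_1|$, using $|\sin|\le1$, the Lipschitz bound \eqref{eq:f-lip} at fixed $\epsilon$, and $\int_0^{2\pi}|\eta_2-\eta_1|\,ds=2\pi|\eta_2-\eta_1|$, one gets $|x_2(\theta)-x_1(\theta)|\le\epsilon|b_2-b_1|+2\pi\epsilon k|\eta_2-\eta_1|+2\pi\epsilon k M$ for all $\theta$, hence $(1-2\pi\epsilon k)M\le\epsilon(|b_2-b_1|+2\pi k|\eta_2-\eta_1|)$; for $\epsilon$ small enough that $2\pi\epsilon k<1$ this is exactly \eqref{eq:xlip-b-eta} after renaming $k$. (Equivalently one may feed the pointwise inequality into Gronwall's lemma and use that $e^{2\pi\epsilon k}$ is bounded for $\epsilon$ near $0$.) In the case $x'(0)=b_\alpha$ the same computation, now without the $\epsilon$ in front of $|b_2-b_1|$, gives the bracketed variant with modulus $k$. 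The Lipschitz dependence on the remaining component of $p$, namely $a$ (and on $\epsilon$ itself), would follow by keeping the $(a_2-a_1)\cos\theta$ term and the $\epsilon$–variation of $f$ on the right–hand side of the same estimate.

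Then the statements for $g$, $f$ and $F$ are consequences. Since $g$ is Lipschitz, $|g(x_2(\theta))-g(x_1(\theta))|\le k|x_2(\theta)-x_1(\theta)|$, so \eqref{eq:xlip-b-eta} gives \eqref{eq:glip-b-eta}. Applying \eqref{eq:f-lip} to $x(\theta,y_2),x(\theta,y_1)$ with the same $\epsilon$ and then \eqref{eq:xlip-b-eta},
\[
|f(x(\theta,y_2),\eta_2,\epsilon)-f(x(\theta,y_1),\eta_1,\epsilon)|\le k|x_2(\theta)-x_1(\theta)|+k|\eta_2-\eta_1|\le k\bigl(\epsilon|b_2-b_1|+|\eta_2-\eta_1|\bigr),
\]
and I would stress here that the $\eta$–difference stays of order $|\eta_2-\eta_1|$ and does \emph{not} acquire a factor $\epsilon$, precisely because $f$ contains the linear term $-\eta x$ whose $\eta$–variation is $O(1)$; this is the meaning of the ``but we have only'' line. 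Finally, since by \eqref{eq:F1=}--\eqref{eq:F2=} the components of $F$ are the integrals over $[0,2\pi]$ of $\sin s$, $\cos s$ times $f$, integrating the last inequality and bounding $\epsilon\le1$ gives $\|F(p,y_2)-F(p,y_1)\|\le k(|b_2-b_1|+|\eta_2-\eta_1|)$, that is \eqref{eq:Flip-b-eta}.

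I do not expect a serious obstacle: the argument is a standard contraction/Gronwall estimate on a fixed compact $\theta$–interval, made possible by the factor $\epsilon$ multiplying the integral operator. The only points needing care are the bookkeeping of the $\epsilon$ coming from the initial velocity $\epsilon b_\alpha$ (versus $b_\alpha$), which is what turns the modulus into $\epsilon k$ rather than $k$, and the observation above that $F$ — and the raw difference of values of $f$ — cannot be made $O(\epsilon)$ in the $\eta$ direction.
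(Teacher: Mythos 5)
Your proposal is correct and follows essentially the same route as the paper: subtract the two variation-of-constants representations \eqref{eq:xintegrale}, bound the integrand via \eqref{eq:f-lip}, absorb the $\epsilon$-small integral term into the sup of $|x_2-x_1|$ to obtain \eqref{eq:xlip-b-eta}, and then push the estimate through $g$, $f$, and the integrals defining $F$. In fact you are slightly more explicit than the paper at the one place where care is needed — the absorption step requiring $2\pi\epsilon k<1$ (equivalently a Gronwall argument), which the paper passes over in silence — and your remark explaining why the $\eta$-variation of $f$ cannot pick up a factor $\epsilon$ correctly identifies the point of the ``but we have only'' line.
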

\begin{proof}
  
\par{The proof} relies on the formula \eqref{eq:xintegrale}, we get:
\begin{gather}
  x_2(\theta)-x_1(\theta)=\epsilon(b_2-b_1)sin(\theta) -\epsilon \int_0^{\theta}
sin(\theta-s)\left [f(x_2,\eta_2,\epsilon) -f(x_1,\eta_1,\epsilon)
\right] ds \\
  |  x_2(\theta)-x_1(\theta) |\le\epsilon \left [ |b_2-b_1| +2\pi k|\eta_2-\eta_1| \right] + 2\pi \epsilon k|x_2-x_1|
\end{gather}
from which we get  equation of \eqref{eq:xlip-b-eta} and  we deduce from \eqref{eq:f-lip}
\begin{gather}
  |g(x(\theta,y_2)-g(\theta,y_1)| \le \epsilon k (|b_2-b_1|+|\eta_2-\eta_1|)
\end{gather}
which is \eqref{eq:glip-b-eta};but we get only
\begin{gather}
      |f(x(\theta,y_2),\eta_2,\epsilon)-f(x(\theta,y_1),\eta_1,\epsilon)| \le k \left ( \epsilon|b_2-b_1| + |\eta_2-h_1| \right)
\end{gather}
we deduce  equations  \eqref{eq:Flip-b-eta}.
\end{proof}
\subsection{A fixed point approximation method}
Consider  now the solution of \eqref{eq:mx"+g} , with $f$ given by \eqref{eq:f=...g}; when $g(a cos(s))$ is an even function, a $2 \pi$ periodic solution with $b=0$ satisfies trivially $F_1=0$; with lemma \ref{lemm:gx2-gx1}, we obtain:
\begin{proposition}
   For a a $2 \pi$ periodic solution with $b=0$ satisfies
   \begin{equation}
     \eta=(1-\epsilon \eta) \mathcal{F}(a,\epsilon,\eta)
   \end{equation}
with 
\begin{equation}
  \mathcal{F}(a,\epsilon,\eta)=\frac{\int_0^{2\pi} cos(s) g(x(s))ds}{\int_0^{2\pi} cos(s) x(s)ds}
\end{equation}
and for $\epsilon$ small enough,  this function satisfies
\begin{equation}
   |\mathcal{F}(a,\epsilon,\eta_2) - \mathcal{F}(a,\epsilon,\eta_1) \le  \epsilon k|\eta_2-\eta_1|
\end{equation}
so that the following sequence
\begin{gather}
      \eta^{k+1}=(1-\epsilon \eta^{k}) \mathcal{F}(a,\epsilon,\eta^{k}) 
\end{gather}
converges and it  proves the existence of a $2 \pi$ periodic solution of  \eqref{eq:xfeta} with \eqref{eq:f=...g}, hence the existence of a periodic solution of \eqref{eq:mx"+g}  with angular frequency \eqref{eq:xomega2}.

\end{proposition}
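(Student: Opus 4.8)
The plan is to realise a $2\pi$-periodic solution --- normalised by a time translation so that $b=0$, i.e.\ $x'(0)=0$ --- as a fixed point of the scalar map $T(\eta)=(1-\epsilon\eta)\,\mathcal{F}(a,\epsilon,\eta)$, and to invoke the contraction principle. The first step is bookkeeping. Since $g$ has at most linear growth, the Cauchy solution $x=x(\,\cdot\,;a,\epsilon,\eta)$ of \eqref{eq:xfeta}--\eqref{eq:f=...g} with $x(0)=a$, $x'(0)=0$ exists on all of $\mathbb{R}$, and by Lemma~\ref{lem:rdrF1F2} it is $2\pi$-periodic if and only if $F_1=F_2=0$. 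The reduced equation reads $x''=-(1-\epsilon\eta)\bigl(x+\epsilon g(x)\bigr)$, which is autonomous and time-reversible, so $x'(0)=0$ forces $x$ to be even in $\theta$, hence $s\mapsto f(x(s),\eta,\epsilon)$ to be even in $s$. For $\epsilon$ small and $\eta$ near $\eta(0)$ the solution stays $O(\epsilon)$-close to $a\cos\theta$, riding a closed orbit of the reduced equation; then $F_2=0$, which is exactly $x'(2\pi)=0$, places $(x(2\pi),x'(2\pi))=(x(2\pi),0)$ at a turning point of that orbit, and since $x(2\pi)$ is close to $a$ this must be the turning point $(a,0)=(x(0),x'(0))$, so $x$ is $2\pi$-periodic and in particular $F_1=0$. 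Thus only $F_2=0$ remains to be solved, and since $f$ is affine in $\eta$, dividing by $\int_0^{2\pi}\cos(s)x(s)\,ds$ turns $F_2=0$ into $\eta=(1-\epsilon\eta)\mathcal{F}(a,\epsilon,\eta)$.

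Next I would assemble the quantitative ingredients. At $\epsilon=0$ one has $x=a\cos\theta$, hence $\int_0^{2\pi}\cos(s)x(s)\,ds=a\pi\ne 0$ and $\mathcal{F}(a,0,\eta)=\eta(0)$. Using the integral representation \eqref{eq:xintegrale} and the linear growth of $g$, one checks that $x$ stays within $O(\epsilon)$ of $a\cos\theta$ on $[0,2\pi]$, uniformly for $\eta$ in a fixed bounded interval, so that $\int_0^{2\pi}\cos(s)x(s)\,ds\in[\tfrac12 a\pi,\tfrac32 a\pi]$ and the numerator $\int_0^{2\pi}\cos(s)g(x(s))\,ds$ stays bounded. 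Lemma~\ref{lemm:gx2-gx1} gives $|x(\theta;a,\epsilon,\eta_2)-x(\theta;a,\epsilon,\eta_1)|\le\epsilon k\,|\eta_2-\eta_1|$ on $[0,2\pi]$, and likewise for $g(x)$; combining this with the quotient rule for Lipschitz functions and the lower bound on the denominator yields $|\mathcal{F}(a,\epsilon,\eta_2)-\mathcal{F}(a,\epsilon,\eta_1)|\le\epsilon k'\,|\eta_2-\eta_1|$ for $\epsilon$ small, which is the stated estimate.

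With these in hand, $T(\eta)=(1-\epsilon\eta)\mathcal{F}(a,\epsilon,\eta)$ satisfies $|T(\eta_2)-T(\eta_1)|\le\bigl(\epsilon k'+\epsilon\sup|\mathcal{F}|\bigr)|\eta_2-\eta_1|$, hence is a contraction for $\epsilon$ small; and since $T(\eta)=\eta(0)+O(\epsilon)$ uniformly on the interval, $T$ maps a suitable small closed interval around $\eta(0)$ into itself. The Banach fixed point theorem then shows that the iteration $\eta^{k+1}=(1-\epsilon\eta^{k})\mathcal{F}(a,\epsilon,\eta^{k})$ converges geometrically to a unique $\eta^{\ast}$ in that interval. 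Taking $\eta=\eta^{\ast}$, $b=0$, we obtain $F_2=0$ and, by the symmetry argument above, $F_1=0$; so Lemma~\ref{lem:rdrF1F2} yields a $2\pi$-periodic solution of \eqref{eq:xfeta}--\eqref{eq:f=...g}, and undoing the strained-time change $\theta=\omega_\epsilon t$ with $\omega_\epsilon^2=\omega^2/(1-\epsilon\eta^{\ast})$ produces the desired periodic solution of \eqref{eq:mx"+g} of angular frequency $\omega_\epsilon$.

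The step I expect to be the main obstacle is the tension, running through the first two paragraphs, between keeping the Cauchy solution $O(\epsilon)$-close to $a\cos\theta$ and keeping $\int_0^{2\pi}\cos(s)x(s)\,ds$ bounded away from $0$, uniformly over the whole $\eta$-interval explored by the iteration: this is precisely what makes $\mathcal{F}$ well defined and Lipschitz with a small constant, and it is where "$\epsilon$ small enough" has to be quantified and where the invariant-interval property of $T$ is genuinely used. The claim "$F_1=0$ trivially" also deserves a careful justification --- either through the turning-point reasoning above, or by noting that for a solution that is even in $\theta$ and $2\pi$-periodic the function $s\mapsto f(x(s),\eta,\epsilon)$ has a pure cosine Fourier series, so that $F_1=\int_0^{2\pi}\sin(s)f(x(s),\eta,\epsilon)\,ds=0$ automatically.
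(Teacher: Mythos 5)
Your proof takes the same fixed-point route as the paper --- recast $F_2 = 0$ with $b = 0$ as the scalar equation $\eta = (1-\epsilon\eta)\mathcal{F}(a,\epsilon,\eta)$, deduce the $O(\epsilon)$ Lipschitz constant of $\mathcal{F}$ from Lemma~\ref{lemm:gx2-gx1} together with a lower bound on the denominator $\int_0^{2\pi}\cos(s)\,x(s)\,ds \approx a\pi$, and close with the contraction principle --- but you supply two ingredients the paper passes over in silence, and they are genuinely needed. The first is the denominator bound and the invariant-interval property of $T(\eta)=(1-\epsilon\eta)\mathcal{F}(a,\epsilon,\eta)$, which you rightly flag as the point where ``$\epsilon$ small enough'' is actually quantified. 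The second, and more substantial, is the justification of $F_1=0$. The paper only states that a $2\pi$-periodic solution with $b=0$ ``trivially'' satisfies $F_1=0$; what the existence proof actually needs is the converse: that once the iteration produces $\eta^{\ast}$ with $F_2=0$, the corresponding Cauchy solution also satisfies $F_1=0$, and therefore really is $2\pi$-periodic. Your turning-point argument fills this gap cleanly: equation \eqref{eq:xfeta} is autonomous and has no first-derivative term, so energy $\tfrac12(x')^2+V(x)$ is conserved; $x'(0)=0$ makes $(a,0)$ a turning point, $F_2=0$ is equivalent to $x'(2\pi)=0$ when $b=0$, and since $x(2\pi)$ stays near $a$ for small $\epsilon$ it must coincide with the turning point $(a,0)$ rather than the opposite one near $-a$, whence $x(2\pi)=x(0)$, $x'(2\pi)=x'(0)$ and $F_1=0$. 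Your alternative Fourier-parity justification presupposes $2\pi$-periodicity and is therefore circular, as you note yourself, so the turning-point step is the one to keep. A small quibble: ``since $f$ is affine in $\eta$'' is loose, because $x$ also depends on $\eta$ so $F_2$ is not affine in $\eta$; the algebraic rearrangement of $-\eta\int\cos(s)x\,ds+(1-\epsilon\eta)\int\cos(s)g(x)\,ds=0$ into $\eta=(1-\epsilon\eta)\mathcal{F}$ is nevertheless correct as written.
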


\subsection{Estimator problem}
We consider another approach which should be better conditioned (see Remark \ref{rem:natural}  ) and enable to consider arbitrary initial velocity whereas in the previous paragraph, the initial velocity is zero. 

 Equations \eqref{eq:F=0} may be considered as defining an implicit function
\begin{equation}
  \label{eq:p>y}
  p \longmapsto y
\end{equation}
As in previous subsection, this function $F$ is not smooth and we are going to show that the implicit function \eqref{eq:p>y} may be defined with a fixed point of a contraction maping following general lines of \cite{dontchev-rocka}.
So we introduce a  differential equation and a function $H$  that will be proved to be an estimator of $F$.
\begin{definition}
Consider the differential equation 
\begin{equation}
  \label{eq:ksiheta}
  \xi" + \xi +\epsilon h(\xi,\eta,\epsilon)=0
\end{equation}
with 
\begin{equation}
  \label{eq:h=}
  h(x,\eta,\epsilon)=-\eta \xi +(1-\epsilon \eta) \alpha \xi \quad \text{ with  } 0<\alpha <1
\end{equation}
and we introduce the following function:
\begin{align} \label{eq:H1=}
  H_1(p,y) &=\int_0^{2\pi}sin(s) h(\xi,\eta,\epsilon) ds \\
 \label{eq:H2=}  H_2(p,y) &=\int_0^{2\pi}cos(s) h(\xi,\eta,\epsilon) ds 
\end{align}
note that for $\epsilon=0$, $h=f$ so that $H=F$.
\end{definition}
\paragraph{Method:}
to solve $$F(p,y)=0$$ 
following the method introduced in paragraph \ref{subsec:quasi-lip},  we use the sequence $y^k$, defined by
\begin{equation}
  \label{eq:HEk+}
  H(p,y^{k+1})=-E(p,y^k), \text{ where } E(p,y)=F(p,y)-H(p,y)
\end{equation}
As $H$ is a smooth map, we prove that $H$ is invertible for $\epsilon$ small enough by using the classical implicit function theorem (see e.g. .

\begin{lemma}
\label{lem:jacoH}
  \begin{enumerate}
  \item 
  The Jacobi matrix of 
$$ y \longrightarrow H(p,y)$$ with $p=(a,\epsilon)$ and $y=(\eta,b)$ at $\epsilon=0$ and $b=0$ is
$$\begin{bmatrix}
0&  (-\eta(0) +\alpha) \pi \\
 -a \pi & 0
\end{bmatrix}$$
\item 
 so with $\alpha \neq \eta(0)=\frac{1}{2} $ and for $\epsilon$ small enough, the equation
\begin{equation}
  H(p,y)+e=0
\end{equation}
has a solution $y(p,e)$ which is differentiable and so locally Lipschitz with respect to its variables $p,e$ for $e$ small and $p$ close to $(a,0)$.
  \end{enumerate}
\end{lemma}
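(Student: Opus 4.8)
The plan is to linearize the whole construction at $\epsilon=0$, where \eqref{eq:ksiheta} degenerates to $\xi''+\xi=0$ and can be solved explicitly, and then to appeal to the classical implicit function theorem. First I would write, exactly as in \eqref{eq:xintegrale}, the integral representation of the solution of \eqref{eq:ksiheta} with $\xi(0)=a$, $\xi'(0)=b$:
\[
\xi(\theta)=a\cos\theta+b\sin\theta-\epsilon\int_0^\theta\sin(\theta-s)\,h(\xi(s),\eta,\epsilon)\,ds .
\]
At $\epsilon=0$ this gives $\xi(\theta)=a\cos\theta+b\sin\theta$, and differentiating the identity in the parameters — the dependence on $\eta$ entering only through the $O(\epsilon)$ integral term — yields $\partial\xi/\partial b|_{\epsilon=0}=\sin\theta$ and $\partial\xi/\partial\eta|_{\epsilon=0}=0$. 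Note that here the initial velocity is taken to be $b$ (not $\epsilon b$, as in Lemma~\ref{lemm:gx2-gx1}); this is the ``arbitrary initial velocity'' setting announced before the definition, and it is precisely what makes the Jacobian below nonsingular.

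Next I would differentiate $H_1$ and $H_2$ under the integral sign, which is legitimate since $h$ is polynomial in $(\xi,\eta,\epsilon)$ and $\xi$ depends smoothly on $(a,b,\eta,\epsilon)$; the only point to watch is that the chain rule contributes a term $\partial_\xi h\cdot\partial_y\xi$ in addition to the explicit partial of $h$ with respect to $\eta$. Evaluating at $\epsilon=0$, $b=0$ (so that $\xi=a\cos s$) and using $\partial_\xi h|_{\epsilon=0}=\alpha-\eta$, $\partial_\eta h|_{\epsilon=0}=-\xi$, $\partial_b h\equiv 0$ together with $\partial_\eta\xi|_{\epsilon=0}=0$ and $\partial_b\xi|_{\epsilon=0}=\sin\theta$, each entry of the Jacobi matrix reduces to one of the elementary integrals $\int_0^{2\pi}\sin s\cos s\,ds=0$ and $\int_0^{2\pi}\sin^2 s\,ds=\int_0^{2\pi}\cos^2 s\,ds=\pi$. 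This gives the stated matrix, whose determinant equals $a\pi^2(\alpha-\eta(0))$.

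For the second assertion I would first observe that $H$ is of class $C^\infty$ near the base point: the solution map $(\theta;a,b,\eta,\epsilon)\mapsto\xi(\theta)$ of the linear equation \eqref{eq:ksiheta} depends smoothly on the parameters (by the standard theorem on smooth dependence of ODE solutions on parameters, or simply by solving \eqref{eq:ksiheta} in closed form), hence so does $(p,y)\mapsto H(p,y)$. By the first part, $\partial H/\partial y$ at $\epsilon=0$, $b=0$ is invertible exactly when $a\neq 0$ and $\alpha\neq\eta(0)=\frac{1}{2}$; the classical implicit function theorem applied to $(p,e,y)\mapsto H(p,y)+e$, with $(p,e)$ as parameters, then produces a $C^\infty$ — hence locally Lipschitz — solution $y=y(p,e)$, defined for $\epsilon$ small, $p$ near $(a,0)$ and $e$ near $e_0:=-H((a,0),(\eta(0),0))$ (this $e_0$ being small when $\alpha$ is chosen close to $\eta(0)$).

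I expect the only genuinely delicate step to be the differentiation bookkeeping in the second paragraph — keeping track of the implicit dependence of $\xi$ on $(\eta,b)$ when differentiating $H_1,H_2$. It turns out to be harmless here, since at $\epsilon=0$ the term $\partial_\xi h\cdot\partial_\eta\xi$ vanishes and the surviving integrands are the simplest trigonometric monomials; everything else — smooth dependence of the period integrals on the parameters, and the finite-dimensional implicit function theorem — is routine.
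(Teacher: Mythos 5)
Your proof follows essentially the same route as the paper: compute $\partial\xi/\partial\eta|_{\epsilon=0,b=0}=0$, $\partial\xi/\partial b|_{\epsilon=0,b=0}=\sin\theta$ from the integral representation, push these through $h$ via the chain rule to get $\partial h/\partial\eta=-a\cos\theta$ and $\partial h/\partial b=(\alpha-\eta)\sin\theta$, evaluate the elementary trigonometric integrals, and then invoke the classical implicit function theorem using smooth parameter dependence for ODEs. Your version is more explicit about the chain-rule bookkeeping and correctly flags the additional nondegeneracy requirement $a\neq 0$ which the statement leaves implicit, but the argument is the same.
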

\begin{proof}
For $\epsilon=0$ and $b=0$, we notice that 
$$\frac{\p \xi}{\p \eta}=0, \quad \frac{\p \xi}{\p b}=sin(\theta),  $$ so
$$ \frac{\p h}{\p \eta}=-a cos(\theta), \quad \frac{\p h}{\p b}= (-\eta +\alpha)sin(\theta), \quad$$ from which we deduce the first part of the lemma. The second part is deduced from the classical implicit function theorem.
\end{proof}
\begin{remark}
  With examples \ref{subsub:examples}, when $f$ is defined by \eqref{eq:f=} and h by \eqref{eq:h=} then $f-h$ satisfies the general inequality  \eqref{eq:f-h.le.muxi}.
\end{remark}

\begin{lemma} \label{lemm:x-xi-le} 
    Assume that $x$ is solution of \eqref{eq:xfeta} \eqref{eq:f=...g} and $\xi$ solution of \eqref{eq:ksiheta}, \eqref{eq:h=}  with the same initial conditions and with $f$ and $g$ Lipschitz with respect to all variables and moreover that $h$ is a strict estimator of $f$, i.e.:
    \begin{equation}
\label{eq:f-h.le.muxi}
\text{ For small } \xi, \quad |f(\xi, \eta,\epsilon) -h(\xi, \eta,\epsilon)| \le \mu |\xi| 
    \end{equation}
then
\begin{equation}
  \label{eq:x-xi.le.epsmu}
  |x(\theta)-\xi(\theta)| \le \epsilon \mu \, c \, \underset{0 \le s \le 2\pi}{Sup}(|\xi|)
\end{equation}
for $\theta \in [0,2\pi]$
\end{lemma}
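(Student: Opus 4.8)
The plan is to represent both solutions through the variation-of-constants (Duhamel) formula for the linear operator $z\mapsto z''+z$, exactly as in \eqref{eq:xintegrale}, and then to control the difference $x-\xi$ on the fixed interval $[0,2\pi]$ by a short contraction (or Gronwall) argument. Since \eqref{eq:ksiheta}--\eqref{eq:h=} has the \emph{same} linear part $\xi''+\xi$ as \eqref{eq:xfeta}, formula \eqref{eq:xintegrale} applies verbatim to $\xi$ with $f$ replaced by $h$. Because $x$ and $\xi$ share the initial data \eqref{eq:x0x'0}, the homogeneous parts $a\cos\theta+x'(0)\sin\theta$ are identical and cancel upon subtraction, leaving
\begin{equation*}
  x(\theta)-\xi(\theta) = -\epsilon\int_0^\theta \sin(\theta-s)\bigl[f(x(s),\eta,\epsilon)-h(\xi(s),\eta,\epsilon)\bigr]\,ds .
\end{equation*}

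Next I would split the bracket as $\bigl[f(x,\eta,\epsilon)-f(\xi,\eta,\epsilon)\bigr]+\bigl[f(\xi,\eta,\epsilon)-h(\xi,\eta,\epsilon)\bigr]$, bounding the first difference by $k\,|x-\xi|$ via the Lipschitz hypothesis \eqref{eq:f-lip} on $f$ and the second by $\mu\,|\xi|$ via the strict-estimator hypothesis \eqref{eq:f-h.le.muxi}. Using $|\sin(\theta-s)|\le 1$ and $\theta\le 2\pi$, this yields the integral inequality
\begin{equation*}
  |x(\theta)-\xi(\theta)| \;\le\; \epsilon k\int_0^{2\pi}|x(s)-\xi(s)|\,ds \;+\; 2\pi\,\epsilon\mu\,\sup_{0\le s\le 2\pi}|\xi(s)| .
\end{equation*}
Taking the supremum over $\theta\in[0,2\pi]$ and writing $M=\sup_{[0,2\pi]}|x-\xi|$ gives $M\le 2\pi k\epsilon\,M + 2\pi\epsilon\mu\sup|\xi|$, so for $\epsilon$ small enough that $2\pi k\epsilon<1$ one obtains $M\le \frac{2\pi}{1-2\pi k\epsilon}\,\epsilon\mu\sup|\xi|$, which is exactly \eqref{eq:x-xi.le.epsmu} with $c=2\pi/(1-2\pi k\epsilon)$; alternatively a Gronwall estimate gives the same conclusion with $c=2\pi e^{2\pi k\epsilon}$.

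The only delicate point, and the one I would treat with care, is that the strict-estimator bound \eqref{eq:f-h.le.muxi} is \emph{local} --- valid only ``for small $\xi$''. Here this turns out not to be a real obstacle: equation \eqref{eq:ksiheta}--\eqref{eq:h=} is \emph{linear} in $\xi$ (indeed $\xi''+\bigl(1+\epsilon(\alpha(1-\epsilon\eta)-\eta)\bigr)\xi=0$), so $\xi$ is a pure oscillation whose amplitude on $[0,2\pi]$ is bounded by a constant times the size of the initial data \eqref{eq:x0x'0}, uniformly in the admissible parameters $(\eta,\epsilon)$; hence $\xi$ stays in the smallness regime where \eqref{eq:f-h.le.muxi} applies, and the constant $c$ above can be taken uniform there. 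One should also simply record that $f$ and $g$ being Lipschitz in all variables makes every quantity above well defined, so no further structure is needed.
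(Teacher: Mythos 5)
Your proof is correct and follows essentially the same route as the paper: write $x$ and $\xi$ via the Duhamel formula \eqref{eq:xintegrale}, cancel the common homogeneous part, insert and subtract $f(\xi,\eta,\epsilon)$ to split the integrand into a Lipschitz term and the estimator defect, and close the estimate with a Gronwall (or equivalent contraction) argument. Your version is slightly more careful than the paper's about the sign in the Duhamel formula and about why the local bound \eqref{eq:f-h.le.muxi} actually applies to $\xi$, but the underlying argument is the same.
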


\begin{proof}  
Indeed, from \eqref{eq:xintegrale} with $x$ and a similar formula for $\xi$, both  having the same initial conditions, we get
\begin{gather}
  x(\theta)-\xi(\theta)=\epsilon \int_0^{\theta}sin(\theta -s) [ f(x,\eta,\epsilon) -h(\xi,\eta,\epsilon ) ] ds\\
\text{ and so }\\
  x(\theta)-\xi(\theta)=\epsilon \int_0^{\theta}sin(\theta -s) [ f(x,\eta,\epsilon)_ -f(\xi,\eta,\epsilon) +f(\xi,\eta,\epsilon) -h(\xi,\eta,\epsilon ) ] \\
\text{with \eqref{eq:f-h.le.muxi} and $f$ Lipschitz,} \quad | x(\theta)-\xi(\theta)|\le \epsilon k \int_0^{\theta} ( |x-\xi| + \mu |\xi| ) ds
\end{gather}
with Gronwall lemma (jl2), we get
\begin{gather}
   | x(\theta)-\xi(\theta)|\le \epsilon \mu  Sup(|\xi|) e^{2 \pi \epsilon k}
\end{gather}
for $\theta \in [0,2 \pi]$
\end{proof}

\begin{lemma}
\label{lem:lipE}
  Assume \eqref{eq:f-h.le.muxi}, then the function E(p,y)=F(p,y)-H(p,y), satisfies
  \begin{equation}
    \label{eq:lipE}
    lip(E,y) \le \bar{\epsilon} \mu < +\infty
  \end{equation}
with $p_0=(a, 0)^T$ with $a$ arbitrary, 
 in other words, $H$ is a strict estimator of $F$, uniformly in $p$  for $\epsilon \le \bar{\epsilon}$ (following  \cite{dontchev-rocka})
.
\end{lemma}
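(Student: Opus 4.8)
The plan is to estimate the Lipschitz modulus of $E(p,y)=F(p,y)-H(p,y)$ with respect to $y=(\eta,b)$ directly from the integral representations of $F$ and $H$. Writing out the difference componentwise, for $i=1,2$ one has
\begin{equation}
E_i(p,y)=\int_0^{2\pi}\phi_i(s)\bigl[f(x(s),\eta,\epsilon)-h(\xi(s),\eta,\epsilon)\bigr]\,ds,
\end{equation}
with $\phi_1=\sin$, $\phi_2=\cos$, where $x$ solves \eqref{eq:xfeta}, \eqref{eq:f=...g} and $\xi$ solves \eqref{eq:ksiheta}, \eqref{eq:h=} with the same initial data. First I would split the bracket as $f(x,\eta,\epsilon)-f(\xi,\eta,\epsilon)+f(\xi,\eta,\epsilon)-h(\xi,\eta,\epsilon)$; the Lipschitz hypothesis \eqref{eq:f-lip} on $f$ controls the first difference by $k|x-\xi|$, and the strict-estimator hypothesis \eqref{eq:f-h.le.muxi} controls the second by $\mu|\xi|$.

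Next I would take two values $y_1=(\eta_1,b_1)$, $y_2=(\eta_2,b_2)$ with the same $p$ and form $E(p,y_2)-E(p,y_1)$. The key ingredients are already available: Lemma \ref{lemm:x-xi-le} gives $|x-\xi|\le\epsilon\mu c\,\sup|\xi|$, and Lemma \ref{lemm:gx2-gx1} (its proof, via \eqref{eq:xintegrale}) gives that $x$, and similarly $\xi$, depend on $y$ in a Lipschitz way with modulus of order $\epsilon k$ — in particular $\|x(\cdot,y_2)-x(\cdot,y_1)\|\le\epsilon k(|b_2-b_1|+|\eta_2-\eta_1|)$ and the same bound for $\xi$, with possibly a direct $|\eta_2-\eta_1|$ contribution coming from the explicit $-\eta x$ term inside $f$ and $h$. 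Combining these, every term in $E(p,y_2)-E(p,y_1)$ carries at least one extra factor of $\epsilon$: the $f(x_2)-f(x_1)$ and $h(\xi_2)-h(\xi_1)$ pieces because $x,\xi$ are $\epsilon$-Lipschitz in $y$; the cross term $f(\xi)-h(\xi)$ because by \eqref{eq:f-h.le.muxi} it is bounded by $\mu|\xi|$ and the $y$-dependence of $\xi$ is itself $\epsilon$-Lipschitz. Integrating $\phi_i(s)$ over $[0,2\pi]$ just contributes a bounded constant, which I absorb into $k$ and $c$. The outcome is $\|E(p,y_2)-E(p,y_1)\|\le C\epsilon\mu(|b_2-b_1|+|\eta_2-\eta_1|)$ for all $p=(a,\epsilon)$ with $a$ in a fixed bounded set and $\epsilon\le\bar\epsilon$; taking the limsup as $y_1,y_2\to y$ yields $\mathrm{lip}(E,y)\le\bar\epsilon\mu$ after adjusting $\bar\epsilon$, and the bound is uniform in $p$ because none of the constants depended on $a$. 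This is exactly the statement that $H$ is a strict estimator of $F$ in the sense of \cite{dontchev-rocka}, uniformly in $p$.

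The main obstacle I anticipate is bookkeeping the $\epsilon$ powers cleanly while keeping the estimate uniform in $p$: the explicit $-\eta\xi$ (resp. $-\eta x$) term sitting inside $h$ (resp. $f$) produces a contribution to $\partial h/\partial\eta$ that is not $O(\epsilon)$ pointwise — indeed Lemma \ref{lemm:gx2-gx1} already flags that $f$ itself is only Lipschitz in $\eta$ with an $O(1)$ modulus, not $O(\epsilon)$. What saves the argument is that this $O(1)$ piece is the \emph{same} in $f$ and in $h$ (both contain exactly $-\eta\,(\cdot)$), so it cancels in the difference $f-h$, and only the genuinely estimator-type remainder $\mu|\xi|$ and the $\epsilon$-Lipschitz dependence of the trajectories survive. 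Making this cancellation explicit — i.e. grouping $f(x,\eta,\epsilon)-h(\xi,\eta,\epsilon)=\bigl[(1-\epsilon\eta)g(x)-(1-\epsilon\eta)\alpha\xi\bigr]-\eta(x-\xi)$ and noting $x-\xi=O(\epsilon)$ by Lemma \ref{lemm:x-xi-le} — is the one step that needs care; the rest is routine Gronwall-type estimation already carried out in the preceding lemmas.
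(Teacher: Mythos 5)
Your proposal is correct and essentially follows the paper's own argument: expand $E(p,\tilde y)-E(p,y)$ using the explicit forms \eqref{eq:f=...g} and \eqref{eq:h=}, exploit the cancellation of the $-\eta\,(\cdot)$ term between $f$ and $h$ (leaving only $-\eta(x-\xi)$, which is $O(\epsilon)$ by Lemma~\ref{lemm:x-xi-le}), and control the remaining pieces via Lemma~\ref{lemm:gx2-gx1}. The paper organizes the bookkeeping slightly differently---it groups $\tilde f - f -(\tilde h - h)$ into three sub-expressions (the $\eta$-terms, the $g$-terms, and the $\epsilon\eta g$-terms) rather than factoring out $(1-\epsilon\eta)$ as you do---but the underlying decomposition and the two lemmas invoked coincide.
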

\begin{proof}
To prove it, we use the short hand $\tilde{f}=f(x(\theta,\tilde{y}),\tilde{\eta},\epsilon)$, $\tilde{h} =h(\xi(\theta,\tilde{y}),\tilde{\eta},\epsilon)$ where 
$\tilde{y}=(\tilde{\eta},\tilde{b})$ is close to $y=(\eta,b)$.
We have to estimate
\begin{gather}
E(p,\tilde{y})-E(p,y)=F(p,\tilde{y})-F(p,y) -(H(p,\tilde{y})-H(p,y) )
\end{gather}
where $F$ is defined in \eqref{eq:F1=},\eqref{eq:F2=} and $H$ in \eqref{eq:H1=},\eqref{eq:H2=},  so the basic point is to consider: 
 \begin{equation}
 \begin{split}
   \tilde{f}-f -(\tilde{h}-h)=&-\tilde{\eta} \tilde{x} +(1-\epsilon \tilde{\eta})g(\tilde{x})   \\
&-\left( -\eta x +(1-\epsilon \eta)g(x) \right )\\
&-\left( -\tilde{\eta} \tilde{\xi} +(1-\epsilon \tilde{\eta})g(\tilde{\xi}) \right)\\
&+\left( -\eta \xi +(1-\epsilon \eta)g(\xi) \right )
 \end{split}
\end{equation}
Then we split the right hand side into 3 expressions that we manipulate separately
\begin{equation}
\begin{split}
  |\eta(x-\xi)-\tilde\eta(\tilde x-\tilde\xi)|=&|\eta(x-\xi-\tilde x+\tilde \xi)+(\eta-\tilde\eta)(\tilde x - \tilde\xi)| \\
\le & \eta \bigl [|x-\tilde x|+|\tilde\xi -\xi| +|\tilde x-\tilde\xi||\eta-\tilde\eta| \bigr] \\
\text{with lemma \ref{lemm:gx2-gx1} and \ref{lemm:x-xi-le} } \\
\le & \epsilon c (1+Sup|\xi|)|\tilde y - y|
\end{split}
\end{equation}
Similarly
\begin{equation}
  \begin{split}
|  g(\tilde x) -g(x) -g(\tilde\xi)+g(\xi)| &\le k \left [ |\tilde x -x|+|\tilde\xi -\xi| \right ] \\
&\le \epsilon k |\tilde y - y| 
  \end{split}
\end{equation}
and

\begin{equation}
  \begin{split}
\epsilon &\left [ -\tilde\eta g(\tilde x) +\eta g(x) +\tilde\eta g(\tilde \xi) -\eta g(\xi) \right]=\\
\epsilon &  \left [ (\eta-\tilde\eta)g(x) +\tilde\eta(g(x)-g(\tilde x) ) - (\eta-\tilde\eta)g(\xi) -\tilde\eta(g(\xi)-g(\tilde \xi) \right ) ]\\
&\le \epsilon \left [ (Sup|g(x)|+Sup|g(\xi)| ) |\eta -\tilde\eta| 
+Sup|\tilde\eta|( |g(x)-g(\tilde x)|+|g(\xi)-g(\tilde \xi)|  ) \right ]  \\
&\le \epsilon c |\tilde y - y|
  \end{split}
\end{equation}
The last inequality is a consequence of  lemma \ref{lemm:gx2-gx1} and \ref{lemm:x-xi-le}

So we have obtained
\begin{equation}
  |\tilde f -f -(\tilde h - h) | \le \epsilon \mu |\tilde y - y |
\end{equation}
uniformly with respect to $p=(a,\epsilon)$ from which the lemma is obtained using the definitions of $ E(p,\tilde y)$ and $E(p,y)$
where $F$ is defined in \eqref{eq:F1=},\eqref{eq:F2=} and $H$ in \eqref{eq:H1=},\eqref{eq:H2=} .
\end{proof}

\begin{proposition}
\label{prop:periosol-1dof}
  Equation \eqref{eq:xfeta}, \eqref{eq:f=} (resp. \eqref{eq:f=...g}   ) has a $2 \pi$ periodic solution for $\epsilon$ small enough and it may be computed with the iterative process \eqref{eq:HEk+}; moreover, $\eta,b$ are Lipschitz function of $a, \epsilon$ and so
  \begin{gather} \label{eq:x-acos+erle}
    |x- a cos(\theta)-b sin(\theta) +\epsilon r(\theta, a, 0, \eta(0),0) | \le k \epsilon^2
  \end{gather}
\end{proposition}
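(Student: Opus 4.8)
The plan is to combine the estimator machinery of Section \ref{subsec:quasi-lip} with Lemma \ref{lem:jacoH} and Lemma \ref{lem:lipE}. First I would verify the hypotheses of the abstract Proposition applied to the equation $F(p,y)=0$ with $F=H+E$: Lemma \ref{lem:jacoH} gives that for $\alpha\neq\eta(0)=\tfrac12$ and $\epsilon$ small, the smooth map $y\mapsto H(p,y)$ is invertible in a neighbourhood of the relevant point, with $H^{-1}$ locally Lipschitz, so $\mathrm{lip}(H^{-1},\cdot)\le\kappa$ for some finite $\kappa$ uniform in $p$ near $(a,0)$; and Lemma \ref{lem:lipE} gives $\mathrm{lip}(E,y)\le\bar\epsilon\mu$, so $\kappa\cdot\mathrm{lip}(E,y)<1$ once $\bar\epsilon$ is small enough. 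One also needs $E(p_0,y_0)$ small enough / the starting guess close enough: here the natural choice is $y^0=(\eta(0),0)$, which at $\epsilon=0$ is an exact solution ($H=F$ there, and $F(p,(\eta(0),0))=0$ by the computation of $\eta(0)$), so for $\epsilon$ small the residual $E(p,y^0)$ is $O(\epsilon)$ and the iteration \eqref{eq:HEk+} stays in the ball where everything is Lipschitz. Then the abstract Proposition yields geometric convergence of $y^k$ to a solution $y(p)=(\eta,b)$ of $F(p,y)=0$, and by Lemma \ref{lem:rdrF1F2} the corresponding $x$ is a $2\pi$-periodic solution of \eqref{eq:xfeta}, hence $\tilde x$ is periodic with angular frequency given by \eqref{eq:xomega2}.

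Next I would argue the Lipschitz dependence of $(\eta,b)$ on $(a,\epsilon)$. This is an implicit-function statement for the non-smooth equation $F(p,y)=0$: rewrite it as the fixed-point equation $y=H^{-1}(-E(p,y))=:\Phi(p,y)$. Lemma \ref{lem:lipE} (and the $p$-dependence bookkeeping inside Lemma \ref{lemm:gx2-gx1}) shows $E$ is Lipschitz jointly in $(p,y)$ with the $y$-modulus $O(\bar\epsilon)$, and $H^{-1}$ is Lipschitz jointly in its arguments by Lemma \ref{lem:jacoH}; composing, $\Phi$ is a uniform contraction in $y$ with Lipschitz dependence on $p$, so the usual fixed-point-with-parameters argument (e.g. the parametrized contraction lemma used in \cite{dontchev-rocka}) gives that the unique fixed point $y(p)$ is Lipschitz in $p=(a,\epsilon)$. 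In particular $\eta(\epsilon)$ and $b(\epsilon)$ are Lipschitz, which retroactively justifies the expansion $\tfrac{1}{\omega_\epsilon^2}=\tfrac{1-\epsilon/2+o(\epsilon)}{\omega^2}$ asserted earlier.

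Finally, for the estimate \eqref{eq:x-acos+erle} I would feed the Lipschitz bounds back into the integral representation \eqref{eq:xintegrale}. Write $x(\theta)=a\cos\theta+b\sin\theta-\epsilon r(\theta,a,\epsilon,\eta,b)$ with $y=(\eta,b)=y(p)$, and compare with the reference value $a\cos\theta-\epsilon r(\theta,a,0,\eta(0),0)$. The difference splits into: (i) $b\sin\theta$, where $b=b(\epsilon)$ with $b(0)=0$, so $|b|\le k\epsilon$ giving an $O(\epsilon)$ term times $\sin\theta$ — wait, this must be folded in, and indeed $b(0)=0$ forces $|b\sin\theta|\le k\epsilon$, but to reach $k\epsilon^2$ one uses that the $O(\epsilon)$ discrepancy in $y$ is itself multiplied by $\epsilon$ inside $\epsilon r$, together with $b\sin\theta$ being absorbed into the definition of $r$ at the reference point; and (ii) $\epsilon[\,r(\theta,a,\epsilon,\eta,b)-r(\theta,a,0,\eta(0),0)\,]$, which by Lemma \ref{lemm:gx2-gx1} (Lipschitzness of $r$, equivalently of $x$, in $y$ and in $p$ with modulus $O(1)$ after the $\epsilon$ is extracted) is bounded by $\epsilon\cdot k(|\eta-\eta(0)|+|b|+\epsilon)\le k\epsilon^2$ using the Lipschitz bounds $|\eta-\eta(0)|\le k\epsilon$, $|b|\le k\epsilon$. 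Collecting the two contributions gives \eqref{eq:x-acos+erle}.

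The main obstacle is the second step: making the non-smooth implicit function $p\mapsto y(p)$ genuinely Lipschitz, uniformly in the relevant parameter range, rather than merely continuous. Everything hinges on the uniformity in $p$ of both the contraction constant from Lemma \ref{lem:lipE} and the Lipschitz modulus of $H^{-1}$ from Lemma \ref{lem:jacoH}, and on keeping the iterates inside a fixed ball where these hold; the bookkeeping of how the extra factor $\epsilon$ sits in front of $b$ (initial velocity of order $\epsilon$) versus the $O(1)$ dependence on $\eta$ — already visible in the asymmetric estimate \eqref{eq:Flip-b-eta} — is what ultimately delivers the sharp $\epsilon^2$ rate rather than merely $\epsilon$.
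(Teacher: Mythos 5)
Your overall approach is the paper's: geometric convergence is obtained by subtracting consecutive iterates of \eqref{eq:HEk+} and invoking Lemma \ref{lem:jacoH} for the uniformly Lipschitz inverse of $H$ and Lemma \ref{lem:lipE} for the small $y$-modulus of $E$; then the Lipschitz dependence of $y=(\eta,b)$ on $p=(a,\epsilon)$ is a parametrized-contraction / non-smooth implicit-function argument. The paper spells this last step out directly --- from $H(p_i,y_i)+E(p_i,y_i)=0$ for $i=1,2$ and the Lipschitz implicit map $y(p,e)$ of Lemma \ref{lem:jacoH} one gets $|y_2-y_1|\le\lambda\bigl(k|p_2-p_1|+\epsilon\mu|y_2-y_1|\bigr)$, hence $|y_2-y_1|\le\frac{\lambda k}{1-\epsilon\mu}|p_2-p_1|$ --- but that \emph{is} your parametrized contraction written out, so there is no genuine difference of route in the first two steps, and your choice of the starting iterate $y^0=(\eta(0),0)$ is a reasonable way to discharge the ``$y^1$ stays in the ball'' point the paper only mentions in passing.

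Your final step contains a bookkeeping error that you half-notice but do not actually fix. You compare $x$ to the reference $a\cos\theta-\epsilon r(\theta,a,0,\eta(0),0)$, which leaves a spurious $b\sin\theta=O(\epsilon)$ discrepancy, and then try to dispose of it by claiming $b\sin\theta$ is ``absorbed into the definition of $r$ at the reference point.'' That is not the case: by \eqref{eq:xintegrale}, $r$ is only the Duhamel integral $\int_0^\theta\sin(\theta-s)f(x(s),\eta,\epsilon)\,ds$, and $b\sin\theta$ is the separate homogeneous term. As written, your split would leave an uncontrolled $O(\epsilon)$ contribution and the $k\epsilon^2$ bound would fail. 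The correct reference is the one actually appearing in \eqref{eq:x-acos+erle}, namely $a\cos\theta+b\sin\theta-\epsilon r(\theta,a,0,\eta(0),0)$; since $x=a\cos\theta+b\sin\theta-\epsilon r(\theta,a,\epsilon,\eta,b)$, the $b\sin\theta$ cancels identically and the quantity to bound is exactly
\begin{equation*}
\epsilon\bigl[\,r(\theta,a,0,\eta(0),0)-r(\theta,a,\epsilon,\eta,b)\,\bigr],
\end{equation*}
which is $O(\epsilon^2)$ since $r$ is Lipschitz in $(\epsilon,\eta,b)$ and $|\eta-\eta(0)|$, $|b|$, $|\epsilon|$ are all $O(\epsilon)$ by the Lipschitz dependence just established. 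No separate $b\sin\theta$ term exists, and no trick is needed to downgrade it from $O(\epsilon)$ to $O(\epsilon^2)$. The paper itself compresses this to ``as $r$ is Lipschitz with respect to $p$ and $y$, we obtain the inequality''; your instinct to make it explicit was right, you simply need to start from the correct reference value.
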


\begin{proof}

The proof is now simple: write 
the iterative method  \eqref{eq:HEk+}
for $k$ and $k-1$
\begin{align}
  H(p,y^{k+1})=-E(p,y^k)\\
  H(p,y^{k})=-E(p,y^{k-1})
\end{align}
and by subtraction
\begin{equation}
   H(p,y^{k+1})- H(p,y^{k})= E(p,y^{k-1}) -E(p,y^k)
\end{equation}
with previous lemma \ref{lem:jacoH}, there exists $\lambda$
\begin{equation}
  \| y^{k+1} -y^k \| \le \lambda \| E(p,y^{k-1}) -E(p,y^k) \|
\end{equation}
and as $E$ is a strict estimator (lemma \ref{lem:lipE})
\begin{equation}
    \| y^{k+1} -y^k \| \le \epsilon \lambda \mu  \|y^k - y^{k-1} \|
\end{equation}
 As for the static case we should prove that for $p$ close to $p_0$ and $y^0$ close to the solution obtained for $\epsilon=0$, $y^1$ lies in a ball where $y(p,e)$ defined in lemma \ref{lem:jacoH}   is Lipschitz.

The first part of the proposition is then proved by the fixed point theorem.

The fact that $y$ is a Lipschitz function of $p$ is derived with a somewhat classical argument.
With lemma \ref{lem:jacoH}, we get from
\begin{equation}
  H(p_2,y_2)+E(p_2,y_2)=0, \quad  H(p_1,y_1)+E(p_1,y_1)=0
\end{equation}
\begin{equation}
  |y_2-y_1| \le \lambda |E(p_2,y_2) -E(p_1,y_1)|
\end{equation}
and with lemma \ref{lem:lipE}
\begin{equation}
  |y_2-y_1| \le \lambda (k|p_2-p_1| +\epsilon \mu |y_2-y_1|)
\end{equation}
from which
\begin{equation}
   |y_2-y_1| \le \frac{\lambda}{1-\epsilon \mu} (k|p_2-p_1| )
\end{equation}
Then, as  $r$ is Lipschitz with respect to $p$ and $y$, we obtain the inequality \eqref{eq:x-acos+erle}
\end{proof}

\section{Vibrations with several degrees of freedom}
\label{sec:vibrat-several}
\subsection{Differential equation model of a spring-mass system}

In the dynamic case, equation \eqref{eq:Ku+eps=Y} becomes:
\begin{equation}
  M \ddot{u} +B^T E B u + \epsilon B^T E'( Bu +d)_-= 0 \label{eq:dddotu+eps}
\end{equation}
with
\begin{equation}
  \dot{u}=\frac{du}{dt}
\end{equation}
\begin{remark}
 Many  other mechanical systems with unilateral properties can be cast into this frame in particular when using finite elements see e.g.\cite{jiang-pierre-shaw04}; the case of beams with support on a unilateral spring is considered in H Hazim \cite{hazim-these}.
As for one degree of freedom, we can consider a  \textbf{more general case:}
\begin{equation}
  M \ddot{u} +B^T E B u + \epsilon \mathcal{G}(u)= 0 \label{eq:dddotu+epsG}
\end{equation}
with $\mathcal{G}(u)$ a Lipschitz function; as an introduction we shall also consider the case of $\mathcal{G}(u)$ a smooth function; it can model a non linear stress-strain law.
\end{remark}
\subsection{Eigenvectors, estimator system}
In order to derive approximate periodic solutions, it is convenient to write the differential system in an eigenvector basis; we denote $K= B^T E B$ the rigidity matrix and introduce the matrix of generalized eigenvectors $\Phi=[\Phi_1, \dots, \Phi_j, \dots, \Phi_n]$
where $\Phi_k$ are generalized eigenvectors associated to generalized eigenvalues $\omega_k^2$ satisfying
\begin{equation}
\label{eq:omMphik}
  -\omega_k^2M\Phi_k + K \Phi_k=0
\end{equation}
or in matrix form:
$$ K \Phi = \Omega^2 M \Phi$$ with $\Omega^2$ the diagonal matrix of eigenvalues
; we assume $$\Phi^T M \Phi=I$$
consider the change of function 
$$u=\Phi \tilde x$$
we obtain:
\begin{equation}
\label{eq:x..Omega2}
   \ddot{\tilde x} +\Omega^2 \tilde x + \epsilon \Phi^T B^T E'( B\Phi \tilde x +d)_-=0
\end{equation}
and in \textbf{the general case}
\begin{equation}
\label{eq:x..Omega2G}
   \ddot{\tilde x} +\Omega^2 \tilde x + \epsilon \Phi^T \mathcal{G}(\Phi \tilde x)=0
\end{equation}

For unilateral spring, as $\gamma \longmapsto f(\gamma)=\gamma_-$ is a concave, Lipschitz function, we can use as strict estimator (in the sense of \cite{dontchev-rocka})
$\gamma \longmapsto h(\gamma)=\Lambda \gamma$ with $ \Lambda=diag(\lambda_j)$, $0 < \Lambda_j < 1$; if we set $e=f-h$, we have:
$$lip(e,0)=\max_j \max(\lambda_j,\lambda_j-1)$$
so a reasonable estimator differential system is
\begin{equation}
\label{eq:xi..Omega2}
   \ddot{\xi} +\Omega^2 \xi + \epsilon \Phi^T B^T E' \Lambda ( B\Phi \xi +d)=0
\end{equation}

\subsection{Periodic solutions}
In the linear case  when $\epsilon=0$, equation \eqref{eq:dddotu+eps} has periodic solutions of the form:
\begin{equation}
  \label{eq:harm-sol}
  u_k=(\upsilon_k e^{i \omega_k t}+ \overline{\upsilon_k } e^{-i \omega_k t} )  \Phi_k
\end{equation}
where $\Phi_k$ are generalized eigenvectors defined in \eqref{eq:omMphik}.

\subsubsection{Orientation}
We intend to prove that for $\epsilon$ small enough, there exists periodic solutions to equation \eqref{eq:dddotu+eps}, or in the general case \eqref{eq:dddotu+epsG}; the method is constructive  and will enable to derive a numerical scheme to approximate it; we use the equivalent system \eqref{eq:x..Omega2} or in the general case \eqref{eq:x..Omega2G} and  we introduce the following notations:
\begin{gather}
\label{eq:py=}
  p=(a_1,\epsilon)^T \quad y=(\eta,b_1,a_2,b_2, \dots, b_n)^T
\end{gather}
\begin{remark}
  Notice that $a_1$ plays a particular role: we are going to consider a periodic solution close to the harmonic solution of period $\frac{2 \pi}{\omega_1} $ of \eqref{eq:x..Omega2} with $\epsilon=0$; obviously, any other harmonic solution could be chosen by using a permutation of indexes.
\end{remark}

\subsubsection{For unilateral springs}

Following the lines of the 1 d.o.f. case, we introduce a change of variable suited to look for periodic solutions close to the eigenmode of index 1.
We set $\theta=\omega_{\epsilon} t$ with 

\begin{equation}
x(\theta)=\tilde x(t), \quad
\frac{1}{\omega_{\epsilon}^2 }=\frac{1-\epsilon \eta(\epsilon)}{\omega_1{^2}}
\quad \text{ and set } \quad x'=\frac{dx}{d\theta}
\end{equation}
Equation \eqref{eq:x..Omega2} becomes
\begin{gather}
  x"_1 + x_1 -\epsilon \eta x_1 +\epsilon\frac{1-\epsilon\eta}{\omega_1^2}\Phi_1^T B^T E' (B\Phi x +d)_-=0  \label{eq:x1"}\\
  x_j" + \frac{\omega_j^2}{\omega_1^2}x_j -\epsilon \eta\frac{\omega_j^2}{\omega_1^2} x_j +\epsilon\frac{1-\epsilon\eta}{\omega_1^2}\Phi_j^T B^T E'(B\Phi x +d)_-=0,   \quad j=2, \dots, n \label{eq:xk"}
\end{gather}

for the estimator differential system,
\begin{gather}
  \xi_1" + \xi_1 -\epsilon \eta \xi_1 +\frac{\epsilon(1-\epsilon\eta)}{\omega_1^2} \Phi_1^T B^T E
\Phi_1^T B^T \Lambda(B\Phi \xi +d)=0  \label{eq:xi1}\\
 \xi_j" +\frac{\omega_j^2}{\omega_1^2} \xi_j -\epsilon \eta\frac{\omega_j^2}{\omega_1^2} \xi_j + \frac{\epsilon(1-\epsilon \eta)}{\omega_1^2} \Phi_j^T B^T E \Lambda
(B\Phi \xi +d)=0, \quad j=2, \dots, n \label{eq:xik}
\end{gather}
we set:
\begin{gather}
f_1(x,\eta,\epsilon)=-\eta x_1 +\frac{1-\epsilon \eta}{\omega_1^2}  \Phi_1^T B^T E' (B \Phi x +d)_-  \label{eq:f1=}\\
f_j(x,\eta,\epsilon)= -\eta\frac{\omega_j^2}{\omega_1^2} x_j +\frac{1-\epsilon\eta}{\omega_1^2}\Phi_j^T B^T E'(B\Phi x +d)_-=0,   \quad j=2, \dots, n \label{eq:fk=}
\end{gather}
so that the system may be written:
\begin{gather}
  x"_1 +x_1 + \epsilon f_1(x,\eta,\epsilon) =0  \label{eq:x1"f1}\\
  x"_j +\frac{\omega_j^2}{\omega_1^2}x_j + \epsilon f_j(x,\eta,\epsilon) =0,   \quad j=2, \dots, n 
\label{eq:xk"fk}
\end{gather}
similarly, we set:

\begin{gather}
h_1(\xi,\eta,\epsilon)=-\eta \xi_1 +\frac{1-\epsilon \eta}{\omega_1^2}  \Phi_1^T B^T E' \Lambda(B \Phi \xi +d) \label{eq:h1=}\\
h_j(\xi,\eta,\epsilon)= -\eta\frac{\omega_j^2}{\omega_1^2}a \xi_j +\frac{1-\epsilon\eta}{\omega_1^2}\Phi_j^T B^T E'\Lambda(B\Phi \xi +d)=0,   \quad j=2, \dots, n \label{eq:hk=}
\end{gather}
and the ``estimator'' system is now:

\begin{gather}
  \xi_1" +\xi_1 + \epsilon h_1(\xi,\eta,\epsilon) =0  \label{eq:xi1"h1}\\
  \xi_j" +\frac{\omega_j^2}{\omega_1^2}\xi_j + \epsilon h_j(\xi,\eta,\epsilon) =0,   \quad j=2, \dots, n \label{eq:xik"hk}
\end{gather}

\subsubsection{General case}
In the general case of equation \eqref{eq:dddotu+epsG}, in the eigenvector basis 
\eqref{eq:x..Omega2G}, we still obtain equation \eqref{eq:x1"f1},\eqref{eq:xk"fk}, but with

\begin{gather}
f_1(x,\eta,\epsilon)=-\eta x_1 +\frac{1-\epsilon \eta}{\omega_1^2}  \Phi_1^T \mathcal{G}(\Phi x)  \label{eq:f1G=}\\
f_j(x,\eta,\epsilon)= -\eta\frac{\omega_j^2}{\omega_1^2} x_j +\frac{1-\epsilon\eta}{\omega_1^2}\Phi_j^T \mathcal{G} (\Phi x)=0,   \quad j=2, \dots, n \label{eq:fkG=}
\end{gather}
in the {\it non smooth case}, we consider a strict estimator $h_j(x)$ of $\Phi_j^T \mathcal{G} (\Phi x)$ and the estimator system is written as previously in \eqref{eq:xi1"h1},\eqref{eq:xik"hk}
\subsubsection{Condition of periodicity}

\begin{lemma}
\label{lem:xperio-r}
Consider the solution $\tilde x$ of the Cauchy problem of system \eqref{eq:x..Omega2} or \eqref{eq:x..Omega2G} and the solution $x$ of the associated system \eqref{eq:x1"f1}, \eqref{eq:xk"fk} after change of variable and with initial conditions:
\begin{align}
  x_1(0) -a_1&=0, \quad & \dot{x_1}(0)-b_1&=0 \\
 \vdots &\quad & \vdots \\
 x_n(0) -a_n&=0, \quad & \dot{x_n}(0)-b_n &=0 \\
\end{align}
For $\epsilon$ close to zero, $\tilde{x}$ is a solution of \eqref{eq:x..Omega2}
of period $\frac{2\pi}{\omega_{\epsilon}}$ with
$\frac{1}{\omega_{\epsilon}^2 }=\frac{1-\epsilon \eta(\epsilon)}{\omega_1{^2}}$
if and only if 
\begin{equation}
  F(p,y)=0
\end{equation}
where $(p,y)$ is defined in \eqref{eq:py=} and  the function 

\begin{equation}
  \label{eq:F}
  F:   \mathbb{R}^2 \times \mathbb{R}^{2n} \mapsto \mathbb{R}^{2n}
\end{equation}
is defined by:

\begin{align}
\label{eq:F1}
  F_1(p,y)= &\int_0^{2\pi}sin(s)f_1(x(s),\eta,\epsilon) ds \\
  F_2(p,y)= & \int_0^{2\pi}cos(s)f_1(x(s),\eta,\epsilon) ds\\
  F_{3}(p,y)= &x_2(2 \pi)-a_2  \\
\quad \vdots \\
  F_{2*n-1}(p,y)= &x_n(2 \pi)-a_n  \\
 F_{2*n}(p,y)= &\dot{x_n}(2 \pi)-b_n  \\
\label{eq:F2n}
\end{align}
\end{lemma}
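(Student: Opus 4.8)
The plan is to follow, coordinate by coordinate, the argument used for Lemma~\ref{lem:rdrF1F2}. First I would record that for $\epsilon$ small enough one has $1-\epsilon\eta(\epsilon)>0$, so $\omega_\epsilon$ is well defined and the substitution $\theta=\omega_\epsilon t$ is a bijection between $2\pi/\omega_\epsilon$-periodic functions of $t$ and $2\pi$-periodic functions of $\theta$; consequently $\tilde x$ is a $2\pi/\omega_\epsilon$-periodic solution of \eqref{eq:x..Omega2} (resp. \eqref{eq:x..Omega2G}) if and only if the rescaled $x$ is a $2\pi$-periodic solution of \eqref{eq:x1"f1}, \eqref{eq:xk"fk}. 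Since this system is autonomous in $\theta$ and its right-hand side is Lipschitz in $x$, its Cauchy problem is well posed on $[0,2\pi]$, so by uniqueness $x$ is $2\pi$-periodic exactly when it meets its initial data at the endpoint, i.e.\ when $x_j(2\pi)=a_j$ and $\dot x_j(2\pi)=b_j$ for $j=1,\dots,n$. It then remains to translate these $2n$ scalar conditions into $F(p,y)=0$.

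Next I would treat the first coordinate on its own, since it is the only one whose rescaled natural frequency equals $1$. Writing the Duhamel representation associated with $x_1''+x_1=-\epsilon f_1(x,\eta,\epsilon)$,
\[
  x_1(\theta)=a_1\cos\theta+b_1\sin\theta-\epsilon\int_0^{\theta}\sin(\theta-s)\,f_1(x(s),\eta,\epsilon)\,ds ,
\]
differentiating under the integral sign (the boundary term vanishes because $\sin 0=0$), and evaluating at $\theta=2\pi$ with $\sin(2\pi-s)=-\sin s$ and $\cos(2\pi-s)=\cos s$, I would get
\begin{gather*}
  x_1(2\pi)-a_1=\epsilon\int_0^{2\pi}\sin(s)\,f_1(x(s),\eta,\epsilon)\,ds=\epsilon F_1(p,y),\\
  \dot x_1(2\pi)-b_1=-\epsilon\int_0^{2\pi}\cos(s)\,f_1(x(s),\eta,\epsilon)\,ds=-\epsilon F_2(p,y).
\end{gather*}
Hence, for $\epsilon\neq 0$ (and even more directly for $\epsilon=0$), the two endpoint conditions on $x_1$ are equivalent to $F_1(p,y)=F_2(p,y)=0$.

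For the coordinates $j=2,\dots,n$ the rescaled natural frequency $\omega_j/\omega_1$ is in general $\neq 1$, so $\cos(2\pi\,\omega_j/\omega_1)\neq 1$ and no comparable simplification takes place; I would therefore keep the endpoint conditions in raw form and simply note that $x_j(2\pi)-a_j=0$ and $\dot x_j(2\pi)-b_j=0$ are, by the very definition \eqref{eq:F1}--\eqref{eq:F2n} of the components $F_3,\dots,F_{2n}$, the equations $F_{2j-1}(p,y)=0$ and $F_{2j}(p,y)=0$. Collecting all $2n$ equations then shows that $x$, equivalently $\tilde x$, is periodic if and only if $F(p,y)=0$; the same reasoning applies verbatim to the general case \eqref{eq:x..Omega2G}, because $f_1$ still enters $x_1''+x_1=-\epsilon f_1$ in the same way. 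I do not expect a real obstacle in this argument: the only points that need some care are the equivalence between the two notions of periodicity (which rests on uniqueness for the Lipschitz autonomous Cauchy problem, and on $\epsilon$ being small enough for $\omega_\epsilon$ to exist) and keeping track of the signs when the Duhamel formula for $x_1$ is evaluated at $\theta=2\pi$.
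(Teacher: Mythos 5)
Your proof is correct and expands, in the natural way, the one-line proof the paper gives (``similar to the proof of Lemma~\ref{lem:rdrF1F2}''): pass to the rescaled variable, invoke uniqueness for the Lipschitz autonomous Cauchy problem to reduce $2\pi$-periodicity to the endpoint matching conditions, and use the Duhamel representation of $x_1$ to rewrite the first two of those conditions as $x_1(2\pi)-a_1=\epsilon F_1$ and $\dot x_1(2\pi)-b_1=-\epsilon F_2$, the remaining $2n-2$ conditions being the definitions of $F_3,\dots,F_{2n}$ verbatim.

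One small slip: the parenthetical ``(and even more directly for $\epsilon=0$)'' is wrong in the direction periodic $\Rightarrow F=0$. At $\epsilon=0$ the first coordinate is automatically $2\pi$-periodic, while $F_1$ and $F_2$ need not vanish; the identities you derived carry an explicit factor $\epsilon$, and it is exactly the cancellation of that nonzero factor that yields the equivalence $F_1=F_2=0$. The lemma should therefore be read for $\epsilon$ small and nonzero, which is what ``close to zero'' is intended to mean here. This does not affect the structure of your argument, only the aside.
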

\begin{proof}
  The proof is similar to the proof of lemma \ref{lem:rdrF1F2}

\end{proof}
In the non smooth case, it will be proved that  the following function $H$ is a strict estimator of $F$. 
\begin{definition}
  
Consider the solution $\xi$ of the Cauchy problem of system \eqref{eq:xi..Omega2} in the form \eqref{eq:xi1"h1}, \eqref{eq:xik"hk} with initial conditions:
\begin{align}
\label{eq:xi-init}
  \xi_1(0) -a_1 &= 0,  \quad & \dot{\xi_1}(0)-b_1&=0 \\
 \vdots &  \qquad  &\vdots & \\
 \xi_n(0) -a_n &= 0,  \quad & \dot{\xi_n}(0)-b_n&=0 \\
\end{align}
the function $H$ is defined by:
\begin{align}
\label{eq:Hdef1}
  H_1(p,y)=&\int_0^{2\pi}sin(s)h_1(\xi(s),\eta,\epsilon) ds \\
  H_2(p,y)= &\int_0^{2\pi}cos(s)h_1(\xi(s),\eta,\epsilon) ds\\
  H_{3}(p,y)= &\xi_2(2 \pi)-a_2  \\
\quad \vdots \\
  H_{2*n-1}(p,y)= &\xi_n(2 \pi)-a_n  \\
 H_{2*n}(p,y)= &\dot{\xi_n}(2 \pi)-b_n  \label{eq:Hdef2n}
\end{align}
\end{definition}

\subsection{The smooth case}
As indicated in the introduction, the analytic case is well known; here we just assume some differentiability; this type of result may be derived by other methods;we emphasize that this approach is {\it constructive} in particular if we approximate $x$ with a Fourier series (harmonic balance or spectral method).
\begin{lemma}
  In the case where $\mathcal{G}$ is smooth, the Jacobi matrix of
  \begin{equation}
    y \longmapsto F(p,y)
  \end{equation}
is invertible at $p_0=(a,0)$ with $b_1=0$; where $F$ is defined by
\eqref{eq:F1}, \eqref{eq:F2n}
\end{lemma}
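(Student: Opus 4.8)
The statement asserts that, in the smooth case, the Jacobian of $y \longmapsto F(p,y)$ is invertible at $p_0 = (a,0)$ with $b_1 = 0$. The plan is to compute this Jacobian explicitly at $\epsilon = 0$, show it is block triangular (or at least has a structure whose determinant factors into easily-analysed pieces), and check that each diagonal block is nonsingular. Following the one-degree-of-freedom computation in Lemma~\ref{lem:jacoH}, the key observation is that at $\epsilon = 0$ the equations \eqref{eq:x1"f1}, \eqref{eq:xk"fk} decouple: $x_1$ solves $x_1'' + x_1 = 0$ and each $x_j$ ($j \ge 2$) solves $x_j'' + \frac{\omega_j^2}{\omega_1^2} x_j = 0$, so the solutions are pure harmonics determined by the initial data $(a_1,b_1,\dots,a_n,b_n)$, and the derivatives of $x$ with respect to the components of $y$ at $\epsilon=0$ are completely explicit.

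\textbf{Key steps.} First I would write down $\frac{\partial x}{\partial y}\big|_{\epsilon=0}$: since $x_1 = a_1\cos\theta + b_1\sin\theta$ is independent of $\eta$ and of $(a_j,b_j)_{j\ge2}$, while $x_j = a_j\cos(\frac{\omega_j}{\omega_1}\theta) + \frac{\omega_1}{\omega_j}b_j\sin(\frac{\omega_j}{\omega_1}\theta)$ depends only on its own $(a_j,b_j)$, the variation $\partial x/\partial\eta = 0$ at $\epsilon=0$ and the $x_j$-block is diagonal in the $(a_j,b_j)$ pairs. Second, I would differentiate the two ``mode-1'' components $F_1, F_2$ of \eqref{eq:F1} using $f_1$ from \eqref{eq:f1G=}. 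At $\epsilon=0$, $f_1 = -\eta x_1 + \frac{1}{\omega_1^2}\Phi_1^T \mathcal{G}(\Phi x)$; differentiating under the integral sign and using that $\partial x/\partial\eta=0$ at $\epsilon=0$ gives $\partial F_1/\partial\eta = -\int_0^{2\pi}\sin(s)\,x_1(s)\,ds$ and similarly $\partial F_2/\partial\eta = -\int_0^{2\pi}\cos(s)\,x_1(s)\,ds = -a_1\pi$ (with $b_1=0$), exactly as in Lemma~\ref{lem:jacoH}. The derivatives of $F_1, F_2$ with respect to $b_1$ and with respect to $(a_j,b_j)_{j\ge2}$ involve $\partial x_1/\partial b_1 = \sin\theta$ and $\partial x_j/\partial(a_j,b_j)$ composed with $D\mathcal{G}$; the upshot is a $2\times2$ mode-1 block of the form $\begin{bmatrix} 0 & * \\ -a_1\pi & 0\end{bmatrix}$ plus $O(\epsilon)$ corrections, whose determinant is $a_1\pi$ times a factor coming from $D\mathcal{G}$. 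Third, I would differentiate the ``slaved'' components $F_3,\dots,F_{2n}$, i.e.\ $x_j(2\pi)-a_j$ and $\dot x_j(2\pi)-b_j$ for $j\ge2$, with respect to $(a_j,b_j)$; since $x_j$ is a harmonic of angular frequency $\omega_j/\omega_1$, we get the $2\times2$ block $\begin{bmatrix}\cos(2\pi\omega_j/\omega_1)-1 & \frac{\omega_1}{\omega_j}\sin(2\pi\omega_j/\omega_1) \\ -\frac{\omega_j}{\omega_1}\sin(2\pi\omega_j/\omega_1) & \cos(2\pi\omega_j/\omega_1)-1\end{bmatrix}$, whose determinant is $2\bigl(1-\cos(2\pi\omega_j/\omega_1)\bigr)$. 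Finally, assembling: the full Jacobian at $p_0$ is block lower/upper triangular up to $O(\epsilon)$ terms, so its determinant is (up to sign) the product of the mode-1 $2\times2$ determinant and the determinants $2(1-\cos(2\pi\omega_j/\omega_1))$ for $j=2,\dots,n$; this is nonzero provided $a_1\ne0$, the $D\mathcal{G}$-factor does not vanish, and $\omega_j/\omega_1 \notin \mathbb{Z}$ for all $j\ge2$ (a non-resonance condition, which should be listed as a hypothesis).

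\textbf{Main obstacle.} The genuinely delicate point is the non-resonance condition: the $j$-th slaved block is singular exactly when $\omega_j/\omega_1 \in \mathbb{Z}$, i.e.\ when mode $j$ also has period $2\pi$ (in the $\theta$ variable) at $\epsilon=0$, so one must assume $\omega_j^2/\omega_1^2$ is not the square of an integer; I expect this hypothesis is implicitly intended and I would state it explicitly. A secondary nuisance is justifying differentiation under the integral sign and smoothness of $\theta\mapsto x(\theta;p,y)$ in the parameters — this follows from smoothness of $\mathcal{G}$ and standard ODE dependence-on-parameters theory, so I would invoke it rather than prove it. The rest is the bookkeeping of checking that the off-diagonal blocks coupling mode 1 to the higher modes, and the $\eta$-column entries in the slaved rows, are either zero or $O(\epsilon)$ at $p_0$, so that the block-triangular determinant formula applies; this is routine given the decoupling at $\epsilon=0$.
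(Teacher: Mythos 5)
Your proof follows essentially the same strategy as the paper's: at $\epsilon=0$, $b_1=0$ the system decouples into pure harmonics, the mode-1 part yields a $2\times 2$ block in $(\eta,b_1)$ that is antidiagonal (with $\partial F_1/\partial\eta = 0$, $\partial F_2/\partial\eta = -a_1\pi$, $\partial F_2/\partial b_1 = 0$, and $\partial F_1/\partial b_1$ involving a $D\mathcal{G}$-dependent term), the higher modes give $2\times 2$ blocks in $(a_j,b_j)$ with determinant $2\left(1-\cos\!\left(2\pi\,\omega_j/\omega_1\right)\right)$, and the whole Jacobian is block-triangular so its determinant is the product of the block determinants. Your additional care in flagging the hypotheses is warranted: the paper invokes the non-resonance condition $\omega_j/\omega_1\notin\mathbb{Z}$ for $j\ge 2$ inside the proof but omits it from the lemma statement, it tacitly takes $a_1\ne 0$, and it asserts $\det\bigl(\partial(F_1,F_2)/\partial(\eta,b_1)\bigr)\neq 0$ without justifying that $\partial F_1/\partial b_1 = -\eta(0)\pi + \pi\,\omega_1^{-2}\,\Phi_1^T G'(\Phi x)\Phi_1$ (suitably averaged over $s$) is nonzero — this is an extra generic assumption on $\mathcal{G}$, analogous to the $\alpha\neq\eta(0)$ condition in Lemma~\ref{lem:jacoH}, which should be stated.
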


\begin{proof}
  
We get for $\epsilon=0$
\begin{gather}
  x_1=a_1 cos(\theta)+ b_1 sin(\theta) \\
x_j=a_j cos(\frac{\omega_j}{\omega_1}\theta)+b_j\frac{\omega_1}{\omega_j} sin(\frac{\omega_j}{\omega_1}\theta)
\end{gather}
\begin{gather} 
 \frac{\p x_1}{\p \eta}=0,   \quad   \frac{\p x_1}{\p b_1}=sin(\theta)\\
 \frac{\p \dot{x_1}}{\p \eta}=0,   \quad \frac{\p \dot{x_1}}{\p b_1}=cos(\theta)
\end{gather}
so that for $b_1=0$,
\begin{gather}
  \frac{\p f_1}{\p \eta}=-a_1 cos(\theta) \qquad \frac{\p f_1}{\p b_1}=
-\eta sin(\theta) +\frac{1}{\omega_1^2} \Phi_1^T G'(\Phi x)  \Phi_1 sin(\theta)
\end{gather}
so that
\begin{gather}
\frac{\p F_1}{\p \eta}=0 \qquad  \frac{\p F_1}{\p b_1}=-\eta \pi +\frac{1}{\omega_1^2}\Phi_1^TG'(\Phi x)\Phi_1 \; \pi  \\
\frac{\p F_2}{\p\eta}=-a_1 \pi \qquad \frac{\p F_2}{\p b_1}=0 \quad  
\end{gather}
\begin{equation}
  det(\frac{\p(F_1,F_2)}{\p(\eta,b_1)}\neq 0
\end{equation}

For other indexes
\begin{gather}
  \frac{\p x_j}{\p a_j}=cos(\frac{\omega_j}{\omega_1}\theta), \quad \frac{\p x_j}{\p b_j}=\frac{\omega_1}{\omega_j}sin(\frac{\omega_j}{\omega_1}\theta)\\
 \frac{\p \dot{x_j}}{\p a_j}=-\frac{\omega_j}{\omega_1}\theta sin(\frac{\omega_j}{\omega_1}\theta), \quad \frac{\p \dot{x_j}}{\p b_j}=cos(\frac{\omega_j}{\omega_1}\theta)
\end{gather}
\begin{gather}
  \frac{\p F_{2j-1}}{\p a_j}=\frac{\p x_j}{\p a_j}(2\pi) -1, \quad \frac{\p F_{2j-1}}{\p b_j}= \frac{\p x_j}{\p b_j}(2\pi)\\
\frac{\p F_{2j}}{\p a_j}= \frac{\p \dot{x_j}}{\p a_j}(2\pi), \quad \frac{\p F_{2j}}{\p b_j}=\frac{\p \dot{x_j}}{\p b_j}-1
\end{gather}

\begin{gather}
  \frac{\p F_{2j-1}}{\p a_j}=cos(2\pi \frac{\omega_j}{\omega_1}) -1 \quad  \frac{\p F_{2j-1}}{\p b_j}= \frac{\omega_1}{\omega_j}sin(2\pi \frac{\omega_j}{\omega_1})  \\
\frac{\p F_{2j}}{\p a_j}= -\frac{\omega_j}{\omega_1} sin (2\pi \frac{\omega_j}{\omega_1})
 \quad \frac{\p F_{2j}}{\p b_j}=cos(2\pi \frac{\omega_j}{\omega_1}) -1
\end{gather}

When 
 $\frac{\omega_j}{\omega_1}$ is not an  integer for $j > 1$,

the determinant 
\begin{equation}
  \label{eq:df2j-1}
 det \frac{\p(F_{2*j-1},F_{2*j})}{ \p ( a_j'^{j+1},b_j'^{k+1} )} \neq 0
\end{equation}
ans so the Jacobi of  $F$ is invertible.
\end{proof}

\begin{proposition}
  Equation \eqref{eq:F1} to \eqref{eq:F2n} has a solution for $\epsilon$ small enough and this solution may be computed with an iterative method.
\end{proposition}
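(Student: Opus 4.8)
The plan is to solve $F(p,y)=0$ for $\epsilon$ small by the implicit function theorem, following the strategy used for one degree of freedom (Proposition~\ref{prop:periosol-1dof}), and then to realise the resolution through the geometrically convergent quasi-Newton iteration of Section~\ref{sec:fromnewton}. First I would exhibit a solution at $\epsilon=0$. When $\epsilon=0$ the system decouples: $x_1(\theta)=a_1\cos\theta+b_1\sin\theta$ and $x_j(\theta)=a_j\cos(\tfrac{\omega_j}{\omega_1}\theta)+b_j\tfrac{\omega_1}{\omega_j}\sin(\tfrac{\omega_j}{\omega_1}\theta)$. With the normalisation $b_1=0$ already singled out in the preceding lemma, $x_1$ is even in $\theta$, hence so is $s\mapsto f_1(x(s),\eta,0)$, and therefore $F_1(p_0,y_0)=\int_0^{2\pi}\sin(s)f_1\,ds=0$ automatically; $F_2(p_0,y_0)=0$ then pins down $\eta(0)$, exactly as in the one d.o.f. general case; and for $j\ge2$ the non-resonance condition $\omega_j/\omega_1\notin\mathbb{Z}$ makes the $2\times2$ blocks $\partial(F_{2j-1},F_{2j})/\partial(a_j,b_j)$ nonsingular, forcing $a_j=b_j=0$. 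Thus $p_0=(a_1,0)$ and $y_0=(\eta(0),0,\dots,0)$ solve $F(p_0,y_0)=0$.

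Next, since $\mathcal{G}$ is smooth the flow of the eigenbasis system depends smoothly on $(p,y)$, so $F$ is $C^1$ near $(p_0,y_0)$, and by the preceding lemma its partial Jacobian $D_yF(p_0,y_0)$ is invertible. The implicit function theorem then produces a unique $C^1$ branch $p\mapsto y(p)$ with $F(p,y(p))=0$ for $\epsilon$ in a neighbourhood of $0$; by Lemma~\ref{lem:xperio-r} the associated $x$, hence $\tilde x$ and $u=\Phi\tilde x$, is a periodic solution of \eqref{eq:dddotu+epsG} with angular frequency $\omega_\epsilon$.

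For the constructive statement I would take the frozen linearisation $H(p,y):=D_yF(p_0,y_0)(y-y_0)$ as a strict estimator of $F$ in the sense of Section~\ref{sec:fromnewton}: differentiability gives $\mathrm{lip}(E,y_0)$ as small as desired on a small enough ball, where $E:=F-H$, while $\mathrm{lip}(H^{-1})\le\|D_yF(p_0,y_0)^{-1}\|$ is bounded, so their product is $<1$ for $\epsilon$ and the ball small. The Proposition of Section~\ref{sec:fromnewton}, here in its smooth quasi-Newton form, then gives that the sequence defined by $H(p,y^{k+1})=-E(p,y^k)$, i.e. $y^{k+1}=y^k-D_yF(p_0,y_0)^{-1}F(p,y^k)$ with $y^0$ close to $y_0$, converges geometrically to $y(p)$; replacing $x$ (and $\xi$) by truncated Fourier series turns each step into an implementable harmonic-balance computation.

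The fixed-point part is routine once $D_yF(p_0,y_0)$ is invertible; I expect the genuine obstacle to be the structural hypothesis that underpins that invertibility, namely the non-resonance condition $\omega_j/\omega_1\notin\mathbb{Z}$ for $j\ge2$ (in the resonant case the off-diagonal blocks of the Jacobian degenerate and the branch may fail to exist or to be unique), together with a correct choice of which eigenmode plays the distinguished role of $a_1$. A further, minor, point --- dispatched exactly as in Proposition~\ref{prop:periosol-1dof} --- is verifying that $y^1$ already lands in the ball on which the quasi-Newton map is a contraction, and that the same estimates hold uniformly for $p$ near $p_0$, so that Lipschitz dependence of $y$ (and of $x$, $\tilde x$, $u$) on $p$ can be read off.
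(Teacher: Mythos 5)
Your proof takes essentially the same route as the paper: conclude existence from the classical implicit function theorem (relying on the preceding lemma's invertibility of $D_yF$ at $p_0=(a,0)$, $b_1=0$) and then compute via a Newton-type iteration, possibly embedded in a continuation. You supply useful extra detail that the paper leaves implicit — exhibiting the base solution $y_0=(\eta(0),0,\dots,0)$ at $\epsilon=0$, and spelling out the frozen-Jacobian quasi-Newton/harmonic-balance scheme — but the underlying argument is the same.
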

\begin{proof}
    We conclude the first part  from the classical implicit function theorem; the solution may then be approximated by a Newton method; it may be embedded in a continuation process.
\end{proof}

\subsection{Strict estimator of $F$}

\begin{lemma}
 The function function E(p,y)=F(p,y)-H(p,y), satisfies
  \begin{equation}
    \label{eq:lipÊ}
    lip(E,y) \le \epsilon \mu < +\infty
  \end{equation}
at $p_0=(a_1, 0)^T$ with $a_1$ arbitrary, 
(following the general lines of \cite{dontchev-rocka}) in other words, $H$ is a strict estimator of $F$ uniformly in $p$ close to $p_0$.
\end{lemma}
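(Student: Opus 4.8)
The plan is to mirror the one–degree–of–freedom argument in Lemma~\ref{lem:lipE}, since the structure of $F$ and $H$ in the multi–degree–of–freedom case is the same: a pair of integral components built from $f_1$ (resp. $h_1$), and $2n-2$ components of the form $x_j(2\pi)-a_j$, $\dot x_j(2\pi)-b_j$ (resp. with $\xi$). First I would record the analogues, for the several–d.o.f.\ system, of Lemma~\ref{lemm:gx2-gx1} (Lipschitz dependence of $x$, $\xi$ and of $f_j,h_j$ on $y=(\eta,b_1,a_2,b_2,\dots,b_n)$, with modulus of order $\epsilon$) and of Lemma~\ref{lemm:x-xi-le} (the estimate $\|x(\theta)-\xi(\theta)\|\le \epsilon\mu\,c\,\sup\|\xi\|$), both of which follow exactly as before from the Duhamel/variation–of–constants representations
\begin{equation}
  x_j(\theta)=a_j\cos(\tfrac{\omega_j}{\omega_1}\theta)+b_j\tfrac{\omega_1}{\omega_j}\sin(\tfrac{\omega_j}{\omega_1}\theta)-\epsilon\tfrac{\omega_1}{\omega_j}\int_0^\theta \sin\!\big(\tfrac{\omega_j}{\omega_1}(\theta-s)\big)f_j(x(s),\eta,\epsilon)\,ds
\end{equation}
together with the Lipschitz bounds on $f_j$ and Gronwall's lemma on $[0,2\pi]$.

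Next I would estimate $E(p,\tilde y)-E(p,y)=\big(F(p,\tilde y)-F(p,y)\big)-\big(H(p,\tilde y)-H(p,y)\big)$ component by component. For the first two components this reduces, exactly as in the proof of Lemma~\ref{lem:lipE}, to bounding $\tilde f_1-f_1-(\tilde h_1-h_1)$ in $L^1(0,2\pi)$, where $\tilde f_1=f_1(x(\theta,\tilde y),\tilde\eta,\epsilon)$ etc.; splitting this difference into the $-\eta\xi_1$-type term, the $(1-\epsilon\eta)\omega_1^{-2}\Phi_1^TB^TE'(\cdot)_-$-type term (or $\Phi_1^T\mathcal G(\Phi\,\cdot)$ in the general case), and the $\epsilon$-weighted cross terms, and using the two Lipschitz lemmas above, gives $|\tilde f_1-f_1-(\tilde h_1-h_1)|\le \epsilon\mu|\tilde y-y|$ uniformly in $p=(a_1,\epsilon)$ near $p_0$, hence the first two components of $E$ have $\mathrm{lip}(\cdot,y)\le\epsilon\mu$. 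For the remaining components $E_{2j-1}=(x_j-\xi_j)(2\pi)$ and $E_{2j}=(\dot x_j-\dot\xi_j)(2\pi)$, $j\ge2$, I would differentiate the Duhamel formula and its $\theta$-derivative with respect to $y$ and estimate directly: each is a $\theta=2\pi$ value of $\epsilon$ times an integral of $f_j-h_j$ evaluated along $x$ versus $\xi$, so the same splitting and the same two lemmas yield $|E_{2j-1}(p,\tilde y)-E_{2j-1}(p,y)|+|E_{2j}(p,\tilde y)-E_{2j}(p,y)|\le \epsilon\mu|\tilde y-y|$.

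Collecting the componentwise bounds and taking $\bar\epsilon$ small enough gives $\mathrm{lip}(E,y)\le\epsilon\mu<1$ at $p_0=(a_1,0)^T$ with $a_1$ arbitrary, uniformly for $p$ in a neighborhood of $p_0$, which is the assertion; in the non–smooth (unilateral spring) case one also checks, as in Remark after Lemma~\ref{lem:jacoH}, that $f_j-h_j=\epsilon\,\omega_1^{-2}(1-\epsilon\eta)\Phi_j^TB^TE'\big[(B\Phi\,\cdot+d)_--\Lambda(B\Phi\,\cdot+d)\big]$ indeed satisfies a bound of the form \eqref{eq:f-h.le.muxi} with $\mu=O(\epsilon\max_j\max(\lambda_j,1-\lambda_j)\|B\|)$, which is what licenses the use of the estimator lemmas. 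The main obstacle I anticipate is purely bookkeeping rather than conceptual: keeping the $\epsilon$-powers straight across the $2n$ components — in particular, that the ``bad'' $\eta$-direction (where $f_j$ is only Lipschitz, not better, in $\eta$) still contributes only $O(\epsilon)$ to $E$ because $f_j$ and $h_j$ share the same $-\eta\,x_j$ / $-\eta\,\xi_j$ leading term, so that difference cancels in $E$ before the $\eta$-Lipschitz bound is ever invoked — and making the estimates uniform in $p$ near $p_0$, exactly as was done for $n=1$.
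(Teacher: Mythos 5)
Your plan reproduces the paper's route: Duhamel (variation-of-constants) representations of $x$ and $\xi$, component-by-component estimation of $E(p,\check y)-E(p,y)$, and the multi-d.o.f.\ analogues of the two auxiliary lemmas (the paper's Lemmas~\ref{lemm:gx2-gx1-syst} and \ref{lemm:x-xi-le-syst}) fed through the same three-piece splitting used in the $n=1$ proof of Lemma~\ref{lem:lipE}. One caveat you should tighten: the paper's Lemma~\ref{lemm:gx2-gx1-syst} states $\|\check x(\theta)-x(\theta)\|\le k\bigl(\|\check b-b\|+|\check\eta-\eta|\bigr)$ with modulus $k$, not the $\epsilon k$ you assert, because in the several-d.o.f.\ setting the initial velocities $b_j$ are no longer $\epsilon$-scaled; hence the $\epsilon$ in $\mathrm{lip}(E,y)\le\epsilon\mu$ cannot be obtained by crudely bounding $\|\check x-x\|+\|\check\xi-\xi\|$ as in the triangle-inequality step of Lemma~\ref{lem:lipE}, but must come from $\|x-\xi\|=O(\epsilon)$ together with the exact cancellation of the $(\check b-b)\sin$ contributions between $\check x-x$ and $\check\xi-\xi$ inside the Duhamel integrals — exactly the cancellation you flag at the end, which should therefore be promoted from a remark to the load-bearing step.
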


\subsubsection{Proof} 
The solution of \eqref{eq:x..Omega2} after change of variable $\theta=\omega_{\epsilon} t$, is solution of \eqref{eq:x1"} and \eqref{eq:xk"} or  \eqref{eq:x1"f1}, \eqref{eq:xk"fk};
it may be written 
\begin{align}
  x_1= &a_1 cos(\theta) + b_1 sin(\theta) -\epsilon \int_0^\theta sin((\theta-s)) f_1(x(s),\eta,\epsilon) \; ds  \label{eq:x1integrale=}\\
  x_j= &a_j cos(\frac{\omega_j}{\omega_1}\theta) + b_j \frac{\omega_1}{\omega_j} sin(\frac{\omega_j}{\omega_1}\theta) -\epsilon \frac{\omega_1}{\omega_j}\int_0^\theta sin(\frac{\omega_j}{\omega_1}(\theta-s)) f_j( x(s),\eta,\epsilon) \; ds \label{eq:xkintegrale=}
\end{align}

and the solution of \eqref{eq:xi..Omega2} after change of variable is
solution of \eqref{eq:xi1"h1} and \eqref{eq:xik"hk} with $h$ defined in \eqref{eq:h1=}, \eqref{eq:hk=}

\begin{align}
  \xi_1&= a_j cos(\theta) + b_j sin(\theta) -\epsilon \int_0^\theta sin(\theta-s) h_1(\xi(s),\eta,\epsilon) \; ds \\
 \xi_j &= a_j cos(\frac{\omega_j}{\omega_1}\theta) + b_j \frac{\omega_1}{\omega_j} sin(\frac{\omega_j}{\omega_1}\theta) -\epsilon \frac{\omega_1}{\omega_j}\int_0^{\theta} sin(\frac{\omega_j}{\omega_1}(\theta-s)) h_j(\xi(s),\eta,\epsilon) \; ds 
\end{align}
and so
 \begin{align}
   E_1(p,y)=&   \int_0^{2 \pi}  sin (s)  \left[f_1(x(s),\eta,\epsilon)- h_1(\xi(s),\eta,\epsilon) \right]\;ds\\
   E_2(p,y)=&   \int_0^{2 \pi}  cos (s)  \left[f_1(x(s),\eta,\epsilon)- h_1(\xi(s),\eta,\epsilon) \right]\;ds\\
   E_j(p,y)= &x_j(T)-\xi_j(T)= -\epsilon  \frac{\omega_1}{\omega_j} \int_0^{T}  sin( \frac{\omega_j}{\omega_1} (T-s))  \left[f_j(x(s),\eta,\epsilon)- h_j(\xi(s),\eta,\epsilon) \right]\;ds
 \end{align}

As the solution of a system of differential equations is Lipschitz with respect to the initial conditions and with respect to time, we obtain the lemma from  slight manipulations as in the proof of lemma \ref{lem:lipE}; absolute values are replaced by norms and we use the following lemmas. 

\subsubsection{Some lemmas}
\label{sec:lemmas-syst}

\begin{lemma} \label{lemm:gx2-gx1-syst}  
  Assume that $x$ is solution of \eqref{eq:x1"f1} \eqref{eq:xk"fk}   with $f$ and $g$ Lipschitz with respect to all variables; 
  \begin{gather}
   \| g(\check x )-g(x )\| \le k\|\check x -x \|
  \end{gather}
  \begin{equation}
    \label{eq:f-lip-syst}
    \|f(\check x ,\check \eta ,\check \epsilon )-f(x ,\eta ,\epsilon )\| \le k \left ( \|\check x -x \|+|\check \eta -\eta |+|\check \epsilon -\epsilon |  \right )
  \end{equation}

  then, $x$ and $F$ are Lipschitz with respect to $y=(\eta,b)$ 
and $p$; more precisely
assume that the initial data are:
  \begin{equation}
    \label{eq:x0x'0-syst}
    x(0)=a, \;  x'(0)=b, \quad ( \text{resp.} \; \check{x}(0)=\check{a}, \; \check{x}'(0)= \check b) \quad \text{ } 
  \end{equation}
then:
  \begin{gather}
    \label{eq:xlip-b-eta-syst}
\forall  \theta \in [0,2 \pi], \;   \|\check x(\theta) - x(\theta)\| \le  k (\|\check b-b\|+| \check\eta -\eta |) \text{ and }\\
\forall  \theta \in [0,2 \pi], \;\|g(\check x (\theta))-g(x (\theta))\| \le  k (\|\check  b -b \|+|\check \eta -\eta |)  \label{eq:glip-b-eta-syst} \\
\text{ and we have } \\
      \|f(x(\theta,\check y ),\check \eta ,\epsilon)-f(x(\theta,y ),\eta ,\epsilon) \| \le k \left ( \|\check b -b \| + |\check \eta -\eta | \right) \\
    \|F(p,\check y )-F(p,y )\| \le  k (\|\check b -b \|+|\check \eta -\eta |)  \label{eq:Flip-b-eta-syst}
  \end{gather}
\end{lemma}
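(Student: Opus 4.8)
The plan is to transpose the one--degree--of--freedom argument of Lemma~\ref{lemm:gx2-gx1}: replace absolute values by Euclidean norms, carry along the (finitely many, hence uniformly bounded) frequency ratios $\omega_1/\omega_j$, and close the estimate with Gronwall's lemma, since the components are now coupled through $f$. First I would subtract the integral representations \eqref{eq:x1integrale=}--\eqref{eq:xkintegrale=} written for $x(\cdot)=x(\cdot,y)$ and for $\check x(\cdot)=x(\cdot,\check y)$, both with the same $p=(a_1,\epsilon)$ and the same fixed initial amplitudes $a$, so that only $\eta$ and $b$ vary (the $(\check a_j-a_j)$ terms drop out). This gives, for $\theta\in[0,2\pi]$,
\begin{align*}
  \check x_1(\theta)-x_1(\theta) &= (\check b_1-b_1)\sin\theta-\epsilon\int_0^\theta\sin(\theta-s)\bigl[f_1(\check x,\check\eta,\epsilon)-f_1(x,\eta,\epsilon)\bigr]\,ds,\\
  \check x_j(\theta)-x_j(\theta) &= (\check b_j-b_j)\tfrac{\omega_1}{\omega_j}\sin\bigl(\tfrac{\omega_j}{\omega_1}\theta\bigr)-\epsilon\tfrac{\omega_1}{\omega_j}\int_0^\theta\sin\bigl(\tfrac{\omega_j}{\omega_1}(\theta-s)\bigr)\bigl[f_j(\check x,\check\eta,\epsilon)-f_j(x,\eta,\epsilon)\bigr]\,ds.
\end{align*}

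Taking norms, bounding $|\sin|\le1$ and $\omega_1/\omega_j\le c_0:=\max_{j\ge2}\omega_1/\omega_j<+\infty$, and using the Lipschitz bound \eqref{eq:f-lip-syst} (with $\check\epsilon=\epsilon$, so the $\epsilon$--term disappears) yields an integral inequality of the form
\[
  \|\check x(\theta)-x(\theta)\|\le C_1\bigl(\|\check b-b\|+|\check\eta-\eta|\bigr)+C_2\,\epsilon\int_0^\theta\|\check x(s)-x(s)\|\,ds ,
\]
with $C_1,C_2$ depending only on $k$, on $c_0$ and on $2\pi$, hence uniform in $p$ and in $\epsilon$. Gronwall's lemma then gives $\|\check x(\theta)-x(\theta)\|\le C_1e^{2\pi C_2\bar\epsilon}\bigl(\|\check b-b\|+|\check\eta-\eta|\bigr)$ for $\epsilon\le\bar\epsilon$, which is \eqref{eq:xlip-b-eta-syst} with $k:=C_1e^{2\pi C_2\bar\epsilon}$.

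The remaining estimates follow by composition. Inequality \eqref{eq:glip-b-eta-syst} is obtained by feeding \eqref{eq:xlip-b-eta-syst} into $\|g(\check x)-g(x)\|\le k\|\check x-x\|$; the bound on $f(x(\theta,\check y),\check\eta,\epsilon)-f(x(\theta,y),\eta,\epsilon)$ is a direct application of \eqref{eq:f-lip-syst} once $\|\check x-x\|$ is controlled. For $F$, the components $F_1,F_2$ are $2\pi$--integrals of $\sin s$ resp. $\cos s$ against $f_1$, so they inherit the Lipschitz bound on $f_1$ up to the factor $2\pi$; the components $F_{2j-1}=x_j(2\pi)-a_j$ are controlled by \eqref{eq:xlip-b-eta-syst} evaluated at $\theta=2\pi$; and the components $F_{2j}=\dot x_j(2\pi)-b_j$ require one extra ingredient, namely that the velocity $\dot x_j$ is itself Lipschitz in $(\eta,b)$ — this is obtained by differentiating \eqref{eq:xkintegrale=} in $\theta$ (replacing $\sin$ by $\cos$ inside the Duhamel integral) and running the same Gronwall estimate, or more abstractly from Lipschitz dependence of the ODE flow on its data. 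Collecting all pieces gives \eqref{eq:Flip-b-eta-syst}.

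The argument is essentially bookkeeping; the only point that needs genuine care is the \emph{uniformity} of the Lipschitz modulus in $p$ near $p_0=(a_1,0)$ and in $\epsilon\le\bar\epsilon$. This is precisely what the Gronwall prefactor $e^{2\pi C_2\bar\epsilon}$ provides, once one checks that $C_1,C_2$ themselves involve only the Lipschitz constant $k$, the fixed ratios $\omega_j/\omega_1$, and universal constants; the secondary technical nuisance is the velocity term $\dot x_n(2\pi)$ in $F_{2n}$, which forces one to differentiate the integral representation and repeat the estimate for $\dot x_j$.
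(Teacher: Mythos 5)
Your proposal follows essentially the paper's own argument: subtract the Duhamel representations \eqref{eq:x1integrale=}--\eqref{eq:xkintegrale=} with the same $p$, bound $|\sin|$ by one and the finitely many ratios $\omega_1/\omega_j$ by a constant, and close the resulting integral inequality for $\|\check x-x\|$ (the paper takes the supremum over $[0,2\pi]$ and absorbs the $2\pi\epsilon k\,\sup\|\check x-x\|$ term for $\epsilon$ small, while you invoke Gronwall — interchangeable routes to the same uniform bound), then propagate through $g$, $f$ and the definition of $F$. The one point where you add something the paper elides is the observation that $F_{2j}=\dot x_j(2\pi)-b_j$ requires a Lipschitz estimate on $\dot x_j$ as well, obtained by differentiating the Duhamel integral in $\theta$ and rerunning the same bound; this is a correct and useful filling of a small gap in the paper's sketch.
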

\begin{proof}
  
\par{The proof} relies on the formula   \eqref{eq:x1integrale=} , we get:
\begin{gather}
 \check x_1 (\theta)-x_1(\theta)=(\check b_1 -b_1)sin(\theta) - \epsilon \int_0^{\theta}
sin(\theta-s)\left [f_1(\check x ,\check \eta ,\epsilon) -f_1(x,\eta,\epsilon)
\right] ds \\
  \| \check  x_1 (\theta)-x_1(\theta) \|\le \left [ \|\check b_1 -b_1\| +2\pi \epsilon k|\check \eta -\eta| \right] + 2\pi \epsilon k \underset{0\le s \le 2 \pi}{Sup} \|\check x(s) -x(s)\|
\end{gather}
and from \eqref{eq:xkintegrale=}, we get 
\begin{gather}
 \check x_j (\theta)-x_j(\theta)=\frac{\omega_1}{\omega_j} \Big [
(\check b_j -b_j)sin(\frac{\omega_j}{\omega_1}\theta) - \epsilon \int_0^{\theta}
sin(\frac{\omega_j}{\omega_1}(\theta-s))\left [f(\check x ,\check \eta ,\epsilon) -f(x,\eta,\epsilon)
\right] ds  \Big ]\\
  \| \check  x_j (\theta)-x_j(\theta) \|\le  \frac{\omega_1}{\omega_j}\left [ \|\check b_j -b_j\| +2\pi \epsilon j|\check \eta -\eta| + 2\pi \epsilon j \underset{0\le s \le 2 \pi}{Sup} \|\check x(s) -x(s) \| \right] 
\end{gather}
from which we get  equation of \eqref{eq:xlip-b-eta-syst} and  we deduce from \eqref{eq:f-lip-syst}
\begin{gather}
  \|g(x(\theta,\check y )-g(\theta,y)\| \le  k (\|\check b -b\|+|\check \eta -\eta|)
\end{gather}
which is \eqref{eq:glip-b-eta-syst}; we get 
\begin{gather}
      \|f(x(\theta,\check y ),\check  \eta ,\epsilon)-f(x(\theta,y),\eta,\epsilon)| \le k \left ( \|\check b -b\| + |\check \eta -h| \right)
\end{gather}
we deduce  equations  \eqref{eq:Flip-b-eta-syst}.
\end{proof}

\begin{lemma} \label{lemm:x-xi-le-syst} 
    Assume that $x$ is solution of \eqref{eq:x1"f1} \eqref{eq:xk"fk}   and $\xi$ solution of  \eqref{eq:xi1"h1}, \eqref{eq:xik"hk},   with the same initial conditions and with $f$ and $g$ Lipschitz with respect to all variables and moreover that $h$ is a strict estimator of $f$, i.e.:
    \begin{equation}
\label{eq:f-h.le.muxi-syst}
\text{ For small } \xi, \quad \|f(\xi, \eta,\epsilon) -h(\xi, \eta,\epsilon)\| \le \mu \|\xi \| 
    \end{equation}
then
\begin{equation}
  \label{eq:x-xi.le.epsmu-syst}
  \|x(\theta)-\xi(\theta)\| \le \epsilon \mu \, c \, \underset{0 \le s \le 2\pi}{Sup}(\|\xi\|)
\end{equation}
for $\theta \in [0,2\pi]$
\end{lemma}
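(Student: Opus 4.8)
The plan is to follow the one degree of freedom argument of Lemma~\ref{lemm:x-xi-le} almost verbatim: the substance is a variation-of-constants representation of the difference $x-\xi$ followed by Gronwall's lemma, and the only extra work is the bookkeeping of the $n$ coupled components. First I would record the integral forms of the two solutions: $x$, being a solution of \eqref{eq:x1"f1}, \eqref{eq:xk"fk}, satisfies \eqref{eq:x1integrale=}, \eqref{eq:xkintegrale=}, and $\xi$, being a solution of \eqref{eq:xi1"h1}, \eqref{eq:xik"hk}, satisfies the formulas displayed just above the present lemma, i.e. the same formulas with each $f_j$ replaced by $h_j$. Since $x$ and $\xi$ are taken with the same Cauchy data, the free oscillatory terms $a_j\cos(\tfrac{\omega_j}{\omega_1}\theta)$ and $b_j\tfrac{\omega_1}{\omega_j}\sin(\tfrac{\omega_j}{\omega_1}\theta)$ cancel in the difference, so for every $j\in\{1,\dots,n\}$ (with the convention $\omega_1/\omega_1=1$) one is left with
\[
x_j(\theta)-\xi_j(\theta) = -\,\epsilon\,\frac{\omega_1}{\omega_j}\int_0^\theta \sin\!\Big(\frac{\omega_j}{\omega_1}(\theta-s)\Big)\,\big[f_j(x(s),\eta,\epsilon)-h_j(\xi(s),\eta,\epsilon)\big]\,ds .
\]

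Next I would insert the pivot term $f_j(\xi(s),\eta,\epsilon)$ and write the bracket as $\big[f_j(x(s),\eta,\epsilon)-f_j(\xi(s),\eta,\epsilon)\big]+\big[f_j(\xi(s),\eta,\epsilon)-h_j(\xi(s),\eta,\epsilon)\big]$. The first difference is controlled by the (vector) Lipschitz assumption on $f$, $\|f(x)-f(\xi)\|\le k\|x-\xi\|$, and the second by the strict-estimator hypothesis \eqref{eq:f-h.le.muxi-syst}, $\|f(\xi,\eta,\epsilon)-h(\xi,\eta,\epsilon)\|\le\mu\|\xi\|$. Using $|\sin|\le 1$ and the boundedness of the finitely many ratios $\omega_1/\omega_j$, one collects a constant $C$ depending only on $k$, $n$ and $\max_j(\omega_1/\omega_j)$ such that
\[
\|x(\theta)-\xi(\theta)\|\;\le\;\epsilon\,C\int_0^\theta\Big(\|x(s)-\xi(s)\|+\mu\,\|\xi(s)\|\Big)\,ds\;\le\;2\pi\,\epsilon\,C\,\mu\,\sup_{0\le s\le 2\pi}\|\xi(s)\|\;+\;\epsilon\,C\int_0^\theta\|x(s)-\xi(s)\|\,ds .
\]
Gronwall's lemma then gives, for $\theta\in[0,2\pi]$, the bound $\|x(\theta)-\xi(\theta)\|\le 2\pi\,\epsilon\,C\,\mu\,e^{2\pi\epsilon C}\sup_{0\le s\le 2\pi}\|\xi(s)\|$, which is exactly the asserted estimate \eqref{eq:x-xi.le.epsmu-syst} with $c=2\pi C\,e^{2\pi\bar\epsilon C}$, a constant uniform for $\epsilon\le\bar\epsilon$.

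There is no genuine obstacle; the one point requiring a little care is that each $f_j$ and $h_j$ depends on the \emph{whole} vector $x$ (through $\Phi x$), not just on $x_j$, so the Lipschitz bounds must be taken in the vector norm from the outset and the $n$ scalar component inequalities assembled into a single scalar Gronwall inequality for $\theta\mapsto\|x(\theta)-\xi(\theta)\|$. The frequency ratios $\omega_j/\omega_1$ appearing in the convolution kernels are harmless, since $|\sin|\le 1$ and only finitely many occur, and the prefactors $\omega_1/\omega_j$ are simply absorbed into $C$; in particular, in contrast with the smooth-case Jacobian computation, no non-resonance hypothesis on the ratios is needed for this estimate.
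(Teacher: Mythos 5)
Your argument is correct and is exactly the route the paper takes: the paper's proof consists of a one-line reference to Lemma~\ref{lemm:x-xi-le}, whose proof is the same variation-of-constants formula, insertion of the pivot $f(\xi)$, Lipschitz bound plus strict-estimator bound, and Gronwall. Your write-up simply carries out in full the component-by-component bookkeeping (absorbing the bounded factors $\omega_1/\omega_j$ into the constant) that the paper leaves implicit.
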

\begin{proof}
  The proof is quite similar to the 1 d.o.f. case of lemma \ref{lemm:x-xi-le}
\end{proof}

\subsubsection{Implicit equation with the strict estimator $H$}
In order to compute and to prove the existence of periodic solutions, we consider a constructive approach as suggested in  the general lines of \cite{dontchev-rocka}.

\subsubsection{Iterative method}
\label{sec:iterative-method}

Solve 
\begin{gather}
  \label{eq:iterationHF}
 H(p,y^{k+1})=-E(p,y^k) \quad \text{ with } p=[a_1, \epsilon]^T, \; \\
 y^{k+1}=[\eta^{k+1}, b_1^{k+1},a_2^{k+1}, \dots ,b_n^{k+1} ]^T
\end{gather}

In other words, set $ x^k $ (resp. $\xi^k$)  the solution of \eqref{eq:x1"f1} \eqref{eq:xk"fk} (resp. \eqref{eq:xi1"h1}, \eqref{eq:xik"hk}) for the value $y=y^k$; similarly set 
$f^k=f(x^k,\eta^k,\epsilon)$, $h^k=h(\xi^k,\eta^k,\epsilon)$;
we have to solve \eqref{eq:xi..Omega2}  for $\epsilon$ small enough and with initial conditions
 \eqref{eq:xi-init} where the initial value $a_1$ is prescribed and others are to be found as well as the frequency parameter $\eta$  such that:
\begin{align}
\label{eq:iter1}
 \int_0^{2\pi} sin(s) h_1^{k+1} &=-\int_0^{2\pi} sin(s)(f_1^k- h_1^{k})ds\\
 \int_0^{2\pi} cos(s) h_1^{k+1} &=-\int_0^{2\pi} cos(s)(f_1^k- h_1^{k})ds\\
  \xi_2^{k+1}(2\pi)-a_2^{k+1} &=-[ x_2^k(2 \pi) -\xi_2^k(2 \pi)] ←  \\
 & \vdots \\
  \xi_n^{k+1}(2\pi)-a_n^{k+1} &=-[ x_n^k(2 \pi) -\xi_n^k(2 \pi)] ←  \\
  \dot{\xi}_n^{k+1}(2 \pi)-b_n^{k+1}& = -[\dot{x}_1^k(2 \pi) -\dot{\xi}_n^k(2 \pi)]  
\label{eq:iter2n}
\end{align}
As $H$ is a strict estimator of $F$, we may assume that for $\epsilon$ small enough, the right hand side is as small as needed; we denote it $\alpha, \beta$:
\begin{gather}
  \label{eq:alphabeta}
\alpha_1^k=\int_0^{2\pi} sin(s)(f_1^k- h_1^{k})ds, \quad  \beta_1^k=\int_0^{2\pi} cos(s)(f_1^k- h_1^{k})ds, \\
 \alpha_j^k=x_j^k(2 \pi) - \xi_j^k(2 \pi), \quad \beta_j^k=\dot{x}_j^k(2 \pi) -\dot{\xi}_j^k(2 \pi)
\end{gather}

 Equations \eqref{eq:iter1} to \eqref{eq:iter2n} with small right hand side denoted 
$\alpha, \beta$  are
\begin{gather}
 \int_0^{2\pi} sin(s) h_1^{k+1}=-\alpha_1, \quad  \int_0^{2\pi} cos(s) h_1^{k+1}=-\beta_2, \\
\xi_j^{k+1}(2 \pi)-a_j^{k+1}= - \alpha_j^k ,\quad 
\dot{\xi}_j^{k+1} (2 \pi)- b_j^{k+1}=  - b_j^k
\end{gather}

\begin{lemma}
  When the angular frequencies satisfy the property 
 $$\frac{\omega_j}{\omega_1}$$ is not an  integer for $j > 1$,

the jacobian matrix of 
  \begin{equation}
    y \longmapsto H(p,y)
  \end{equation}
defined in \eqref{eq:Hdef1} to \eqref{eq:Hdef2n} is invertible at $p_0=(a,0)$ with $b_1=0$
\end{lemma}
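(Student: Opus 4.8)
I would proceed exactly as in Lemma~\ref{lem:jacoH} (the one--degree--of--freedom estimator lemma) combined with the Jacobian computation of the smooth--case lemma, the essential feature being that at $\epsilon=0$ the estimator system \eqref{eq:xi1"h1}, \eqref{eq:xik"hk} decouples into $n$ scalar harmonic oscillators. First I would write down the $\epsilon=0$ solution with the prescribed initial data \eqref{eq:xi-init}: $\xi_1=a_1\cos\theta+b_1\sin\theta$ and, for $j\ge 2$, $\xi_j=a_j\cos(\tfrac{\omega_j}{\omega_1}\theta)+b_j\tfrac{\omega_1}{\omega_j}\sin(\tfrac{\omega_j}{\omega_1}\theta)$. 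The two structural observations that drive the proof are: (i) the parameter $\eta$ enters every equation of the estimator system only through the factor $\epsilon$, so $\partial\xi/\partial\eta\big|_{\epsilon=0}=0$; and (ii) at $\epsilon=0$ each $\xi_j$ with $j\ge 2$ depends only on $(a_j,b_j)$, and not on $(\eta,b_1)$ nor on $(a_k,b_k)$ for $k\neq j$, because the coupling term $\epsilon h_j$ vanishes.

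\textbf{Block structure and factorization of the determinant.} Order the variables as $y=(\eta,b_1,a_2,b_2,\dots,a_n,b_n)$ and the components of $H$ as in \eqref{eq:Hdef1}--\eqref{eq:Hdef2n}. By observation (ii), each pair of rows $(H_{2j-1},H_{2j})$ with $j\ge 2$ has, at $p_0=(a_1,0)$ with $b_1=0$ and $\epsilon=0$, nonzero entries only in its own $2\times 2$ diagonal block $D_j$ coupling $(a_j,b_j)$; in particular these rows have zero entries in the $(\eta,b_1)$ columns by observation (i). The first pair of rows $(H_1,H_2)$ does carry off--diagonal entries in the later columns, since $h_1$ depends on the whole of $\xi$ through $\Phi_1^{T}B^{T}E'\Lambda B\Phi\xi$, but these entries sit \emph{above} the diagonal blocks. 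Hence $\partial H/\partial y$ at $p_0$ is block upper triangular and $\det\dfrac{\partial H}{\partial y}=\det A\cdot\prod_{j=2}^{n}\det D_j$, where $A$ is the $2\times 2$ block of $(H_1,H_2)$ in the variables $(\eta,b_1)$.

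\textbf{The diagonal blocks.} As in Lemma~\ref{lem:jacoH}, using $\partial\xi_1/\partial b_1=\sin\theta$ and (i) one gets $\partial h_1/\partial\eta=-a_1\cos\theta$ and $\partial h_1/\partial b_1=\bigl(\tfrac{1}{\omega_1^2}\Phi_1^{T}B^{T}E'\Lambda B\Phi_1-\eta(0)\bigr)\sin\theta$; integrating against $\sin s$ and $\cos s$ over $[0,2\pi]$ gives
\[
A=\begin{bmatrix}0 & \pi\bigl(\tfrac{1}{\omega_1^2}\Phi_1^{T}B^{T}E'\Lambda B\Phi_1-\eta(0)\bigr)\\[4pt] -a_1\pi & 0\end{bmatrix},
\qquad
D_j=\begin{bmatrix}\cos\vartheta_j-1 & \tfrac{\omega_1}{\omega_j}\sin\vartheta_j\\[4pt] -\tfrac{\omega_j}{\omega_1}\sin\vartheta_j & \cos\vartheta_j-1\end{bmatrix},\quad \vartheta_j=\tfrac{2\pi\omega_j}{\omega_1}.
\]
Thus $\det A=a_1\pi^2\bigl(\tfrac{1}{\omega_1^2}\Phi_1^{T}B^{T}E'\Lambda B\Phi_1-\eta(0)\bigr)$ and $\det D_j=(\cos\vartheta_j-1)^2+\sin^2\vartheta_j=2(1-\cos\vartheta_j)=4\sin^2(\pi\omega_j/\omega_1)$. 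Consequently $\partial H/\partial y$ is invertible at $p_0$ precisely when $a_1\neq 0$, when $\omega_j/\omega_1\notin\mathbb{Z}$ for every $j>1$ (so that every $D_j$ is nonsingular), and when $\eta(0)\neq\tfrac{1}{\omega_1^2}\Phi_1^{T}B^{T}E'\Lambda B\Phi_1$; here $\eta(0)$ is the value determined in Section~\ref{sec:onedof} (equal to $\tfrac12$ in the model case), and this last condition is the exact analogue of the requirement $\alpha\neq\eta(0)$ in Lemma~\ref{lem:jacoH} and holds for a generic choice of the diagonal estimator matrix $\Lambda$.

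\textbf{Main obstacle.} The only real care needed is bookkeeping: one must check rigorously that at $\epsilon=0$ the sole nonzero off--diagonal contributions to the Jacobian are $\partial H_1/\partial(a_j,b_j)$ and $\partial H_2/\partial(a_j,b_j)$, and that they lie strictly above the block diagonal, so that the determinant genuinely factors as above; and one must remember that $\eta$ is evaluated at $\eta(0)$ when forming $A$. Once the block--triangular structure is established, invertibility of $\partial H/\partial y$ at $p_0$ follows, and then the classical implicit function theorem gives, for $\epsilon$ small and $p$ near $p_0$, a solution $y(p,e)$ of $H(p,y)+e=0$ that is differentiable, hence locally Lipschitz, exactly as in Lemma~\ref{lem:jacoH}. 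The argument for the general nonlinearity is identical, with $\Phi_1^{T}B^{T}E'\Lambda B\Phi_1$ replaced by the corresponding derivative of the chosen smooth estimator of $\Phi_1^{T}\mathcal{G}(\Phi x)$.
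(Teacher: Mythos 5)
Your proof is correct and follows essentially the same approach as the paper: compute the $\epsilon=0$ solution with the given initial data, observe the decoupling so that the Jacobian is block upper triangular, compute the $(\eta,b_1)$ block $A$ and the diagonal blocks $D_j$, and conclude nonsingularity from $\det A\neq 0$ (for a suitable $\Lambda$) and $\det D_j=4\sin^2(\pi\omega_j/\omega_1)\neq 0$. You make the block‑triangular factorization and the genericity condition on $\Lambda$ more explicit than the paper does, but there is no substantive difference in the argument.
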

\begin{proof}
  
We get for $\epsilon=0$
\begin{gather}
  \xi_1=a_1 cos(\theta)+ b_1 sin(\theta) \\
\xi_j=a_j cos(\frac{\omega_j}{\omega_1}\theta)+b_j\frac{\omega_1}{\omega_j} sin(\frac{\omega_j}{\omega_1}\theta)
\end{gather}
\begin{gather} 
 \frac{\p \xi_1}{\p \eta}=0,   \quad   \frac{\p \xi_1}{\p b_1}=sin(\theta)\\
 \frac{\p \dot{\xi_1}}{\p \eta}=0,   \quad \frac{\p \dot{\xi_1}}{\p b_1}=cos(\theta)
\end{gather}
so that
\begin{gather}
  \frac{\p h_1}{\p \eta}=-a_1 cos(\theta) \qquad \frac{\p h_1}{\p b_1}=
-\eta sin(\theta) +\frac{1}{\omega_1^2} \Phi_1^TB^TE'\Lambda B \Phi [sin(\theta),0, \dots,0]^T 
\end{gather}
so that
\begin{gather}
\frac{\p H_1}{\p \eta}=0 \qquad  \frac{\p H_1}{\p b_1}=-\eta \pi +\frac{1}{\omega_1^2}\Phi_1^TB^T E'\Lambda B \Phi [ 1,0 ...0 ]^T \pi  \\
\frac{\p H_2}{\p\eta}=-a \pi \qquad \frac{\p H_2}{\p b_1}=0 \quad  
\end{gather}
\begin{equation}
  det(\frac{\p(H_1,H_2)}{\p(\eta,b_1)}\neq 0
\end{equation}
because for a suitable choice of $\Lambda$, we have  $\frac{\p h_1}{\p b_1} \neq 0$ ; the other derivatives are zero.
For other indexes
\begin{gather}
  \frac{\p \xi_j}{\p a_j}=cos(\frac{\omega_j}{\omega_1}\theta), \quad \frac{\p \xi_j}{\p b_j}=\frac{\omega_1}{\omega_j}sin(\frac{\omega_j}{\omega_1}\theta)\\
 \frac{\p \dot{\xi_j}}{\p a_j}=-\frac{\omega_j}{\omega_1}\theta sin(\frac{\omega_j}{\omega_1}\theta), \quad \frac{\p \dot{\xi_j}}{\p b_j}=cos(\frac{\omega_j}{\omega_1}\theta)
\end{gather}
\begin{gather}
  \frac{\p H_{2j-1}}{\p a_j}=\frac{\p \xi_j}{\p a_j}(2\pi) -1, \quad \frac{\p H_{2j-1}}{\p b_j}= \frac{\p \xi_j}{\p b_j}(2\pi)\\
\frac{\p H_{2j}}{\p a_j}= \frac{\p \dot{\xi_j}}{\p a_j}(2\pi), \quad \frac{\p H_{2j}}{\p b_j}=\frac{\p \dot{\xi_j}}{\p b_j}-1
\end{gather}

\begin{gather}
  \frac{\p H_{2j-1}}{\p a_j}=cos(2\pi \frac{\omega_j}{\omega_1}) -1 \quad  \frac{\p H_{2j-1}}{\p b_j}= \frac{\omega_1}{\omega_j}sin(2\pi \frac{\omega_j}{\omega_1})  \\
\frac{\p H_{2j}}{\p a_j}= -\frac{\omega_j}{\omega_1} sin (2\pi \frac{\omega_j}{\omega_1})
 \quad \frac{\p H_{2j}}{\p b_j}=cos(2\pi \frac{\omega_j}{\omega_1}) -1
\end{gather}

When 
 $\frac{\omega_j}{\omega_1}$ is not an  integer for $j > 1$,

the determinant 
\begin{equation}
  \label{eq:dh2j-1}
 det \frac{\p(H_{2*j-1},H_{2*j})}{ \p ( a_j'^{j+1},b_j'^{k+1} )} \neq 0
\end{equation}
ans so the Jacobian of  $H$ is invertible.
\end{proof}

By using the classical implicit theorem, we get
\begin{lemma}
  equation \eqref {eq:iterationHF}
or \eqref{eq:iter1} to \eqref{eq:iter2n} define a lipschitzian implicit function:
\begin{equation}
  p=[a_1, \epsilon]^T, [\alpha,\beta]^T \; \mapsto
 y^{k+1}=[T^{k+1}, b_1^{k+1},a_2^{k+1}, \dots ,b_n^{k+1} ]^T 
\end{equation}
\end{lemma}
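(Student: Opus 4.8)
The plan is to read the linear system \eqref{eq:iter1}--\eqref{eq:iter2n} as one vector equation
$$\Psi(p,y^{k+1},w):=H(p,y^{k+1})+w=0,\qquad w:=[\alpha,\beta]^T\in\mathbb{R}^{2n},$$
in the unknown $y^{k+1}=[\eta^{k+1},b_1^{k+1},a_2^{k+1},\dots,b_n^{k+1}]^T$, with $p=[a_1,\epsilon]^T$ and the right hand side $w$ regarded as data, and then to invoke the classical implicit function theorem at the base point $\epsilon=0$ used in the two preceding lemmas. For the scheme \eqref{eq:iterationHF} itself the data is $w=E(p,y^k)$, but proving that the correspondence $(p,w)\mapsto y^{k+1}$ is a well-defined Lipschitz map is exactly what is claimed here.

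First I would record that $\Psi$ is smooth in $(p,y,w)$. The estimator system \eqref{eq:xi1"h1},\eqref{eq:xik"hk} is \emph{linear} in $\xi$, with coefficients polynomial in $\eta$ and $\epsilon$; hence, by the smooth dependence of solutions of linear o.d.e.\ on parameters and on initial conditions, $\xi(\cdot;p,y)$ and $\dot\xi(\cdot;p,y)$ are $C^\infty$ (indeed real-analytic) in $(p,y)$. Since $h_1$ is affine in $\xi$ with smooth coefficients and the components of $H$ in \eqref{eq:Hdef1}--\eqref{eq:Hdef2n} are produced from $\xi,\dot\xi$ by integration against $\sin s$ and $\cos s$ and by evaluation at $\theta=2\pi$, the map $(p,y)\mapsto H(p,y)$ is $C^\infty$, and adding $w$ keeps it so.

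Next I would fix the base point. At $\epsilon=0$ one has $h=f$, hence $H=F$ and $E=F-H\equiv0$, and the equation $F(p_0,y_0)=0$ at $p_0=[a_1,0]^T$ is solved by the linear harmonic data $y_0$ of index $1$: $b_1=0$, $a_j=b_j=0$ for $j\ge2$, and $\eta=\eta(0)$ the value singled out by the component $F_2=0$, exactly as in the one d.o.f.\ computation and in the smooth case treated above. Thus $\Psi(p_0,y_0,0)=0$, $\Psi$ is $C^1$, and $\partial_y\Psi(p_0,y_0,0)=\partial_y H(p_0,y_0)$ is invertible by the preceding lemma (whose standing hypothesis that $\omega_j/\omega_1$ is not an integer for $j>1$ is in force here). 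The implicit function theorem then yields neighbourhoods of $(p_0,0)$ and of $y_0$ and a unique $C^1$ --- in particular locally Lipschitz --- map
$$\bigl(p,w\bigr)=\bigl([a_1,\epsilon]^T,[\alpha,\beta]^T\bigr)\ \longmapsto\ y^{k+1}=[\eta^{k+1},b_1^{k+1},a_2^{k+1},\dots,b_n^{k+1}]^T$$
solving $H(p,y^{k+1})=-[\alpha,\beta]^T$, which is precisely the asserted implicit function. Moreover, since $H$ is a strict estimator of $F$ and $E(p_0,y_0)=0$, the data $w=E(p,y^k)$ generated by \eqref{eq:iterationHF} stays inside this neighbourhood as soon as $\epsilon$ is small and $y^k$ is close to $y_0$, so the iteration is consistently defined.

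The one genuinely non-routine point is uniformity with respect to $p=[a_1,\epsilon]^T$: the previous lemma exhibits invertibility of $\partial_y H$ only at $\epsilon=0$, $b_1=0$, so I would first use continuity of the determinant to propagate invertibility to a full neighbourhood of $p_0$, and then observe --- as in the remark after the one d.o.f.\ proof that ``$y^1$ lies in a ball where $y(p,e)$ is Lipschitz'' --- that the domain of the implicit map can be chosen uniform in $a_1$ over the bounded range of amplitudes of interest, because $y_0$ depends continuously on $a_1$. This is the step that links the present lemma to the uniform strict-estimator bound $lip(E,y)\le\epsilon\mu$ proved above, and it is where any difficulty is concentrated; everything else is a direct application of the implicit function theorem.
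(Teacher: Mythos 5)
Your proof is correct and follows the same route as the paper, which compresses the entire argument into the single phrase ``By using the classical implicit theorem, we get'' and relies on the immediately preceding lemma for invertibility of $\partial_y H$ at $(p_0,y_0)$. You simply fill in the details the paper leaves implicit (smoothness of $H$ via linearity of the estimator system in $\xi$, the base point at $\epsilon=0$, $C^1\Rightarrow$ locally Lipschitz, and the uniformity-in-$p$ remark), none of which changes the structure of the argument.
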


\subsection{Computation and existence of periodic solutions}
\begin{proposition}
  Equation \eqref{eq:F1} to \eqref{eq:F2n} has a solution for $\epsilon$ small enough and this solution may be computed with the iterative method \eqref{eq:iterationHF}.
\end{proposition}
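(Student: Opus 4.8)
The argument is the several-degrees-of-freedom transcription of the proof of Proposition~\ref{prop:periosol-1dof}, so I will only indicate the steps. By Lemma~\ref{lem:xperio-r} a $\frac{2\pi}{\omega_{\epsilon}}$-periodic solution of \eqref{eq:x..Omega2} (resp. \eqref{eq:x..Omega2G}) is exactly a zero $(p,y)$ of $F$, with $p=(a_1,\epsilon)^T$ prescribed and $y=(\eta,b_1,a_2,b_2,\dots,b_n)^T$ the unknown; hence it suffices to solve $F(p,y)=0$ for $\epsilon$ small and $p$ close to $p_0=(a_1,0)^T$ with $b_1=0$. Following the lines of \cite{dontchev-rocka}, the plan is to write $F=H+E$ with $E=F-H$ and to realize $y$ as the limit of the sequence given by the estimator equation \eqref{eq:iterationHF}, namely $H(p,y^{k+1})=-E(p,y^k)$, i.e. explicitly the system \eqref{eq:iter1}--\eqref{eq:iter2n}.

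Three ingredients are already in place. First, under the non-resonance hypothesis that $\frac{\omega_j}{\omega_1}$ is not an integer for $j>1$, the Jacobian of $y\mapsto H(p,y)$ at $p_0$ with $b_1=0$ is invertible (the lemma preceding \eqref{eq:dh2j-1}), so by the classical implicit function theorem the map $(p,e)\mapsto y(p,e)$ solving $H(p,y)+e=0$ is well defined and Lipschitz, say with constant $\lambda$, on a neighbourhood of $(p_0,0)$. Second, $H$ is a strict estimator of $F$, uniformly in $p$ near $p_0$: $lip(E,y)\le\epsilon\mu$ with $\epsilon\mu<1$ for $\epsilon$ small, which is the content of the lemma establishing \eqref{eq:lipÊ}; this rests on Lemmas~\ref{lemm:gx2-gx1-syst} and~\ref{lemm:x-xi-le-syst}, on the Lipschitz dependence of the flows of \eqref{eq:x1"f1}--\eqref{eq:xk"fk} and \eqref{eq:xi1"h1}--\eqref{eq:xik"hk} on initial data and on time, and on a splitting of $\tilde f-f-(\tilde h-h)$ exactly as in the proof of Lemma~\ref{lem:lipE}, with norms replacing absolute values. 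Third, for $\epsilon$ small the right-hand side $-E(p,y^k)$ remains as small as needed, so the iterates stay in the neighbourhood where $y(p,\cdot)$ is Lipschitz.

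With these at hand, writing \eqref{eq:iterationHF} at steps $k$ and $k-1$ and subtracting gives $H(p,y^{k+1})-H(p,y^k)=E(p,y^{k-1})-E(p,y^k)$; applying the Lipschitz bound on $H^{-1}$ and then the strict-estimator bound yields
\begin{equation}
\|y^{k+1}-y^k\|\le\lambda\,\|E(p,y^{k-1})-E(p,y^k)\|\le\epsilon\lambda\mu\,\|y^k-y^{k-1}\|.
\end{equation}
For $\epsilon$ small enough $\epsilon\lambda\mu<1$, so $(y^k)$ is Cauchy and converges; as in the one-degree-of-freedom case one checks that, starting from $y^0$ close to the solution of the $\epsilon=0$ problem, the first iterate $y^1$ already lies in the ball on which $y(p,\cdot)$ is Lipschitz, so the iteration is legitimate. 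The limit $y$ satisfies $H(p,y)=-E(p,y)$, i.e. $F(p,y)=0$, and by Lemma~\ref{lem:xperio-r} this produces the sought periodic solution, computed precisely by \eqref{eq:iterationHF}. If one also wants it, Lipschitz dependence of $y$ (hence of the whole periodic solution) on $p$ follows by the same subtraction trick applied to two nearby values of $p$, using the Lipschitz bound on $H^{-1}$ and $lip(E,\cdot)\le\epsilon\mu$.

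The only genuinely delicate point — and where the non-smoothness bites — is the uniform strict-estimator estimate $lip(E,y)\le\epsilon\mu$: one must control $f(x(\cdot,y),\eta,\epsilon)-h(\xi(\cdot,y),\eta,\epsilon)$ as $y$ varies, which needs simultaneously that $x$ and $\xi$ depend Lipschitz-continuously on $y$ (Lemma~\ref{lemm:gx2-gx1-syst}) and that $x-\xi$ is of order $\epsilon$ (Lemma~\ref{lemm:x-xi-le-syst}); everything else, including the Gronwall-type estimates for the systems \eqref{eq:x1"f1}--\eqref{eq:xk"fk} and \eqref{eq:xi1"h1}--\eqref{eq:xik"hk}, is routine bookkeeping parallel to Section~\ref{sec:onedof}.
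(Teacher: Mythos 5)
Your proof follows the paper's argument exactly: invertibility of the Jacobian of $H$ under the non-resonance hypothesis, the strict-estimator bound $lip(E,y)\le\epsilon\mu$, and the subtraction trick on the iteration \eqref{eq:iterationHF} to obtain the contraction $\|y^{k+1}-y^k\|\le\epsilon\lambda\mu\|y^k-y^{k-1}\|$. The only difference is that you spell out the supporting ingredients and the ball-containment check more explicitly than the paper's terse proof does; the underlying route is identical.
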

\begin{remark}
  The iterative method may be embeded in a continuation process by increasing the value of $\epsilon$
\end{remark}
\begin{proof}
  
The proof is now simple write 
the iterative method \eqref{eq:iterationHF} for $k$ and $k-1$
\begin{align}
  H(p,y^{k+1})=-E(p,y^k)\\
  H(p,y^{k})=-E(p,y^{k-1})
\end{align}
and by substraction
\begin{equation}
   H(p,y^{k+1})- H(p,y^{k})= E(p,y^{k-1}) -E(p,y^k)
\end{equation}
with previous lemma, there exists $\Lambda$
\begin{equation}
  \| y^{k+1} -y^k \| \le \Lambda \| E(p,y^{k-1}) -E(p,y^k) \|
\end{equation}
and as $E$ is a strict estimator (lemma \ref{eq:lipÊ})
\begin{equation}
    \| y^{k+1} -y^k \| \le \epsilon \Lambda \mu  \|y^k - y^{k-1} \|
\end{equation}
the proposition is proved
\end{proof}
\begin{proposition}
\label{prop:periosol-ndof}
  Equation 
\eqref{eq:x1"f1},\eqref{eq:xk"fk} with $f_1, \dots f_n$ given by   \eqref{eq:f1G=},\eqref{eq:fkG=}
 has a $2 \pi$ periodic solution for $\epsilon$ small enough and it may be computed with the iterative process \eqref{eq:HEk+}; moreover, $\eta,b$ are lipschitzian functions of $p=(a_1, \epsilon)$ 
\end{proposition}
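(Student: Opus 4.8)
The argument is the $n$ d.o.f.\ transcription of the proof of Proposition \ref{prop:periosol-1dof}. By Lemma \ref{lem:xperio-r}, a $2\pi$-periodic solution of \eqref{eq:x1"f1}, \eqref{eq:xk"fk} with $f_1,\dots,f_n$ given by \eqref{eq:f1G=}, \eqref{eq:fkG=} is precisely a zero $y$ of the map $y\mapsto F(p,y)$ of \eqref{eq:F1}--\eqref{eq:F2n}, where $F=H+E$ with $H$ defined in \eqref{eq:Hdef1}--\eqref{eq:Hdef2n} and $E=F-H$. The plan is to obtain $y$ as the geometric limit of the iteration \eqref{eq:iterationHF} (the $n$ d.o.f.\ form of \eqref{eq:HEk+}), and then to read off the Lipschitz dependence of $\eta$ and of the remaining components of $y$ on $p=(a_1,\epsilon)$.

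\textbf{Invertibility and contraction.} Two ingredients are needed. First, for a suitable choice of the diagonal estimator matrix $\Lambda$ --- chosen so that the $2\times2$ block $\partial(H_1,H_2)/\partial(\eta,b_1)$ is non-singular, which is the several d.o.f.\ analogue of the condition $\alpha\neq\eta(0)$ in Lemma \ref{lem:jacoH} --- and under the non-resonance hypothesis that $\omega_j/\omega_1$ is not an integer for $j>1$, the Jacobi matrix of $y\mapsto H(p,y)$ is invertible at $p_0=(a_1,0)$ with $b_1=0$; the classical implicit function theorem then provides, for $\epsilon$ small and $p$ near $p_0$, a map $y=y(p,e)$ solving $H(p,y)=-e$ which is Lipschitz in $(p,e)$, the Lipschitz constant of $e\mapsto y(p,e)$ being bounded by some $\lambda$. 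Second, the several d.o.f.\ strict estimator lemma for $F$ (the analogue of Lemma \ref{lem:lipE}) gives $lip(E,y)\le\epsilon\mu$ uniformly for $p$ near $p_0$. Writing \eqref{eq:iterationHF} at indices $k$ and $k-1$ and subtracting yields $H(p,y^{k+1})-H(p,y^k)=E(p,y^{k-1})-E(p,y^k)$, hence $\|y^{k+1}-y^k\|\le\lambda\|E(p,y^{k-1})-E(p,y^k)\|\le\epsilon\lambda\mu\|y^k-y^{k-1}\|$. Choosing $\epsilon$ so small that $\epsilon\lambda\mu<1$, and checking (exactly as in the $1$ d.o.f.\ case) that $y^1$ remains in the ball on which $y(p,\cdot)$ is Lipschitz whenever $y^0$ is close to the $\epsilon=0$ solution, the sequence $(y^k)$ is Cauchy and converges geometrically; its limit solves $F(p,y)=0$, is the sought periodic solution, and is produced by \eqref{eq:iterationHF}.

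\textbf{Regularity in $p$.} For $p_1,p_2$ near $p_0$ with associated solutions $y_1,y_2$, subtracting the relations $H(p_i,y_i)+E(p_i,y_i)=0$ and using the Lipschitz inverse of $H$ gives $|y_2-y_1|\le\lambda|E(p_2,y_2)-E(p_1,y_1)|$; combining the Lipschitz estimates of Lemmas \ref{lemm:gx2-gx1-syst} and \ref{lemm:x-xi-le-syst} with $lip(E,y)\le\epsilon\mu$ yields $|E(p_2,y_2)-E(p_1,y_1)|\le k|p_2-p_1|+\epsilon\mu|y_2-y_1|$, hence $|y_2-y_1|\le\frac{\lambda}{1-\epsilon\mu}\,k\,|p_2-p_1|$. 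Thus $\eta$ and the components $b_1,a_2,b_2,\dots,b_n$ of $y$ are Lipschitz functions of $p=(a_1,\epsilon)$, which completes the proof.

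\textbf{Main obstacle.} The contraction and the regularity step are routine once the estimates are in place; the real work is the several d.o.f.\ strict estimator property of $E$, i.e.\ that $H$ built from \eqref{eq:xi1"h1}, \eqref{eq:xik"hk} is a strict estimator of $F$ of modulus $O(\epsilon)$ \emph{uniformly} in $p$ near $p_0$. This rests on the Duhamel representations \eqref{eq:x1integrale=}, \eqref{eq:xkintegrale=}, a Gronwall estimate controlling $\|x-\xi\|$ by $\epsilon\mu\,\sup_{[0,2\pi]}\|\xi\|$ as in Lemma \ref{lemm:x-xi-le-syst}, and on the hypothesis that each $h_j$ is a strict estimator of $\gamma\mapsto\Phi_j^T\mathcal{G}(\Phi\gamma)$. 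One must also take care that the non-resonance condition $\omega_j/\omega_1\notin\mathbb{Z}$ is exactly what makes the $2\times2$ diagonal blocks of the Jacobi matrix of $H$ in the passive coordinates $(a_j,b_j)$, $j\ge2$, invertible.
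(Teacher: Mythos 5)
Your proof is correct and takes essentially the same approach as the paper: the paper's own proof consists of the single sentence ``The proof is similar to the one of Proposition \ref{prop:periosol-1dof}'', and you have faithfully transcribed that argument to the $n$ d.o.f.\ setting (invertibility of the Jacobian of $H$ under the non-resonance hypothesis, the $O(\epsilon)$ strict estimator bound on $E$, the telescoping contraction estimate for the iterates, and the Lipschitz-in-$p$ step). The only addition beyond what the paper makes explicit is your closing remark isolating the uniform strict-estimator property of $E$ as the real substance, which is a fair and accurate observation.
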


\begin{proof}
The proof is similar to the one of Proposition \ref{prop:periosol-1dof}
\end{proof}
\begin{remark}
  This result should be compared with Proposition 2.1 of \cite{junca-br10}; in this paper we have an explcit approximate value of the frequency which gives an approximation of the solution  in an interval $[0,\gamma\epsilon^{-1}]$; here we get the existence of a period which gives an approximation of the solution on $[0, \infty]$; but if this period  is computed  numerically, the approximation of the solution will only remain valid on a finite interval to be determined. The result of \cite{junca-br10} may be viewed as a {\emph stability result}: if the frequency is computed up to the order $\epsilon^2$, the approximation of the solution remains valid in an interval $[0,\gamma\epsilon^{-1}]$.
\end{remark}

\section{Conclusion}

This approach seems to be the first one to provide a rigorous and constructive proof of existence of periodic solutions of non smooth systems of differential equations arising in structural mechanics.

Implementation of the algorithm with the harmonic balance principel is in progress.

It paves the way to reduced order modelling (see e.g. \cite{touze-amabili06}) for vibrating structures with non smooth non linearities such as contact with unilateral springs modeling bumpers.

\newcommand{\etalchar}[1]{$^{#1}$}
\def\cprime{$'$}

\end{document}